\documentclass[12pt,a4paper,oneside,titlepage]{article}

\usepackage{amsmath}
\usepackage{amsthm}
\usepackage{amsfonts}
\usepackage[utf8]{inputenc}
\usepackage{graphicx}
\usepackage{geometry}
\usepackage{color}
\newgeometry{tmargin=2cm, bmargin=3cm, lmargin=2cm, rmargin=2cm}
\usepackage{ifthen}
\usepackage{hyperref}
\usepackage{nameref}

\makeatletter
\let\orgdescriptionlabel\descriptionlabel
\renewcommand*{\descriptionlabel}[1]{%
  \let\orglabel\label
  \let\label\@gobble
  \phantomsection
  \edef\@currentlabel{#1}%
  \let\label\orglabel
  \orgdescriptionlabel{#1}%
}
\makeatother

\newtheorem{theorem}{Theorem}[section]
\newtheorem{lemma}[theorem]{Lemma}
\newtheorem{definition}[theorem]{Definition}
\newtheorem{problem}{Problem}
\newtheorem*{problem2}{Problem}
\newtheorem{remark}[theorem]{Remark}

\newcommand{\setR}{\mathbb{R}}
\newcommand{\Nset}{\mathbb{N}}

\renewcommand{\epsilon}{\varepsilon}

\newcommand{\bracket}[1]{\left( {#1} \right)}
\newcommand{\set}[1]{\left\lbrace {#1} \right\rbrace} 
\newcommand{\abs}[1]{\left| {#1} \right|} 				
\newcommand{\dt}[1][t]{\,\mathrm{d}{#1}}				
\newcommand{\essinf}{\operatorname{essinf}\limits}

\newcommand{\norm}[2][]{ \left\| {#2} \right\|_{#1}}
\newcommand{\dual}[3][]{ \left\langle {#2} ; {#3} \right\rangle_{ {#1}}}
\newcommand{\distance}[3][d]{ \operatorname{{#1}}\left( {#2} ; {#3} \right)}
\newcommand{\sequence}[3]{\left( {#1} \right)_{#2}^{#3} }
\newcommand{\dualSpace}[1]{{#1}^\ast}
\newcommand{\Ck}[2][]{\operatorname{C}^{#1} \ifthenelse{\equal{#2}{}}{}{\left( {#2} \right)}}
\newcommand{\Ckz}[2][]{\operatorname{C}^{#1}_0 \ifthenelse{\equal{#2}{}}{}{\left( {#2} \right)}}
\newcommand{\Lp}[2][p]{\operatorname{L}^{#1}\ifthenelse{ \equal{#2}{} }{}{\left( {#2} \right)}}
\newcommand{\Wppz}[3]{\operatorname{W}^{ {#1},{#2}}_{0} \ifthenelse{ \equal{#3}{} }{}{\left( {#3} \right)}}
\newcommand{\Wpp}[3]{\operatorname{W}^{ {#1},{#2}} \ifthenelse{ \equal{#3}{} }{}{\left( {#3} \right)} }
\newcommand{\Hpz}[2]{\operatorname{H}^{{#1}}_{0} \ifthenelse{ \equal{#2}{} }{}{\left( {#2} \right)}}

\newcommand{\pLaplace}{\Delta_p}
\newcommand{\boundary}{\partial}
\newcommand{\cut}[2]{ \left. {#1} \right|_{#2} }
\newcommand{\weakto}{\rightharpoonup}
\newcommand{\compactembedding}{\subset \subset}
\newcommand{\continuousembedding}{\hookrightarrow}

\author{Kowalski Piotr, Piwnik Joanna} 
\title{Three solutions for elliptic Dirichlet boundary value problem with singular weight.\\ }
\date{\today}

\begin{document}
\maketitle

\newpage
\begin{abstract}
In this paper we prove the~existence of two non-trivial weak solutions of Dirichlet boundary value problem for p-Laplacian problem with a~singular part and two disturbances satisfying the~proper assumptions. The~abstract existence result we use is the~famous Ricceri theorem.
\end{abstract}

\section{Introduction}

Singular elliptic problems have been intensively and widely studied in recent years. Among others, p-Laplacian operator appears to be mostly investigated elliptic operator. An important subclass of such problems are problems involving singular nonlinearities \cite{Kristaly.Varga}. We recall that $\Delta_p$ denotes the~p-Laplacian operator, namely $\Delta_p u=\text{div}\bracket{|\nabla  u|^{p-2} \nabla u}$. In this paper we are especially interested in eigenvalue problem, derived from the~one well explained by Lindquist in \cite{Lindqvist}. Some problems require to find the~smallest positive scalar $\lambda_1 \in \setR$ for which the~equation  
\begin{equation}\label{intro.eq.1}
- \pLaplace u(x) = \lambda_1 a(x) \abs{u(x)}^{p-2}u(x) \text{  for  a.e.  }x \in \Omega,
\end{equation}
with $\Omega \subseteq \setR^n,\ n\geq 2,$ being bounded with sufficiently smooth boundary, has a~nontrivial solution in $\Wppz{1}{p}{\Omega}$, see \cite{Cuesta.Quoirin, Lucia.Prashanth}. Such $\lambda_1$ shall be referred as the~principle eigenvalue. The~starting point is usually the~simplest case $a(x)\equiv 1$, but also the~weighted version of this equation finds a~lot of applications. By the~solution we usually understand the~weak one. Some researchers are interested in proving the~existence of three solutions for small $\mu, \lambda_1>\mu >0$ with a~semilinear term. For example
\begin{equation}\label{intro.eq.2.1}
- \pLaplace u(x) = \mu a(x) \abs{u(x)}^{p-2}u(x) + f(x,u(x))\text{  for a.e.  }x \in \Omega,
\end{equation}
or
\begin{equation}\label{intro.eq.2}
- \pLaplace u(x) + \mu a(x) \abs{u(x)}^{p-2}u(x) = f(x,u(x))\text{  for a.e.  }x \in \Omega,
\end{equation}
where $a \in \Lp[\infty]{\Omega}$ with $\essinf\limits_{x \in \Omega} a(x) >0$, e.q, \cite{Bonanno.Candito} or \cite{Dagui.Molica-Bisci}. We should note that the~problem \eqref{intro.eq.2} is essentially different from the~problem in \eqref{intro.eq.1}, since both $\lambda_1>0$ and $\mu>0$.

\noindent Yet the~most interesting cases contain the~weight function $a\colon \setR^n \to \setR$ unbounded and having a~singularity. Stationary problem involving such nonlinearities describes some applied economical models and several physical phenomena, for instance conduction in electrically conducting materials. The~problem \eqref{intro.eq.2.1} can be found for example in \cite{Cuesta.Quoirin,Ferrara.Molica-Bisci,Filippucci.Pucci.Robert}. In particular, many results on the~existence and multiplicity of solutions for nonlinear problems involving the~$p$-Laplacian operator have been obtained by adopting variational methods. We also recall that, several authors have treated the~case $p\leq n$, by using completely different techniques \cite{Binding.Drabek.Huang, Gasinski.Papageorgiou}.

\noindent In this paper we consider the~following problem defined on $\Omega$, where $\Omega\subseteq \setR^n$ is bounded, has Lipschitz boundary, $0 \in \Omega$ and $2 \leq p < n$. 
By $\Wppz{1}{p}{\Omega}$ we understand the~closure of $\Ckz[\infty]{\Omega}$ in norm of Sobolev space $\Wpp{1}{p}{\Omega}$. 
\begin{problem} \label{problem.classical.solution}
Find $u \in \Wppz{1}{p}{\Omega}$ such that:
\begin{align*}
- \pLaplace u(x) + \mu \frac{\abs{u(x)}^{p-2}u(x)}{\abs{x}^p} &= \lambda f(u(x)) + \gamma g(u(x)),\: \mbox{  for a.e.  } x \in \Omega, \\
\cut{u}{\boundary \Omega} &\equiv 0.
\end{align*}
\end{problem}
\noindent The~above presented problem is understand as equivalent to Problem \ref{intro.problem.weak.solution}.
\begin{problem} \label{intro.problem.weak.solution}
Find $u \in \Wppz{1}{p}{\Omega}$ such that for all $v \in \Wppz{1}{p}{\Omega}$
\begin{align*}
\int\limits_{\Omega} \abs{\nabla u(x)}^{p-2} \nabla u(x) \nabla v(x) + \mu \frac{\abs{u(x)}^{p-2} u(x) v(x) }{ \abs{x}^p} \dt[x] \\
=\lambda \int\limits_{\Omega} f(u(x))v(x) \dt[x] + \gamma \int\limits_{\Omega} g(u(x))v(x) \dt[x].
\end{align*}
\end{problem}
\setcounter{problem}{1}

\noindent We assume that $f\colon \setR \to \setR$ satisfies the~following conditions 
\begin{description}
\item[{(f1)}] $\lim\limits_{\abs{t}\to 0}\frac{f(t)}{\abs{t}^{p-1}}=0$. 
\item[{(f2)}] $\lim\limits_{\abs{t}\to +\infty}\frac{f(t)}{\abs{t}^{p-1}}=0$. 
\item[{(f3)}] $\sup\limits_{t\in \setR} F(t)>0$, where $F(t)=\int\limits^{t}_{0} f(s) \dt[s]$.
\end{description}
For function $g\colon \setR \to \setR$ we assume only that there exists $c_g\geq 0$ and $1<q<\frac{pn}{n-p}$ such that 
$$
\abs{g(t)}\leq c_g (1+\abs{t}^{q-1}) \text{ for all } t\in\setR\ .
$$
The main tool shall be the~Ricceri three critical point theorem \cite{Ricceri}, and from it shall follow that there are intervals for $\lambda>0$ and $\gamma >0$ in which the~Problem \ref{intro.problem.weak.solution} has at least three weak solutions.

This paper is organized as follows. In section 2 we recall some results from the~theory of uniformly monotone operators, the~properties of uniformly convex Banach spaces and some inequalities related to embedding results. In section 3 we introduce the~corresponding energy functional for Problem \ref{intro.problem.weak.solution} and we define some auxiliary functionals. We also study properties of those functionals, such as sequential weak lower semicontinuity, the~compactness of derivative and existence of continuous inverse. In section 4 we prove the~necessity conditions for a~certain three critical points theorem by Ricceri and we also formulate the~main result of this paper. The last section contains an example.
This work is mainly motivated by the~study of elliptic problems with singular and sublinear potentials done in \cite{Kristaly.Varga}.

\section{Preliminaries}
We recall some of the~basic definitions and theorems we require from functional analysis. We start by recalling the~definition of compact operator.

\begin{definition}[{\cite[def 4.16]{Rudin} Compact operator}]
Assume $X,Y$ are Banach spaces, $U$ is open unit ball contained in $X$. Operator $A\colon X\to Y$ is called compact if closure of $A(U)$ is compact in $Y$. 
\end{definition}
\noindent We shall prove that a~functional satisfies a~strong monotonic condition, namely it is an uniformly monotone operator.

\begin{definition}[{\cite[def 25.2]{Zeidler} Uniformly monotone operator}]
Let $X$ be a~Banach space and $A\colon X\to \dualSpace{X}$. We say that A is uniformly monotone operator, when
$$
a\bracket{\norm{u-v}{}}\norm{u-v}\leq \dual{Au-Av}{u-v} \text{ for all } u,v \in X,
$$
where the~continuous function $a\colon\setR_+\to \setR_+$, is strictly monotone increasing with $a(0)=0$ and $\lim\limits_{t\to +\infty}a(t)=+\infty.$
\end{definition}

\noindent The~Ricceri abstract existence result require that derivative of a~given operator admits a~continuous inverse. In order to obtain such property we shall use the~following theorem. 

\begin{theorem}[{\cite[th 26.A]{Zeidler} Browder-Minty theorem}]\label{Browder-Minty}
Let $A\colon X\to X^*$ be a~monotone, coercive, and hemicontinuous operator on the~real, separable, reflexive Banach space $X$. If $A$ is strictly monotone, then the~inverse operator $A^{-1}\colon X^*\to X$ exists and is strictly monotone, demicontinuous, and bounded. If $A$ is uniformly monotone, then $A^{-1}$ is continuous. 
\end{theorem}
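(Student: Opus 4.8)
The plan is to reduce everything to the central surjectivity assertion — that a monotone, coercive, hemicontinuous operator on a real separable reflexive Banach space maps onto all of $\dualSpace{X}$ — and then to extract injectivity, boundedness, demicontinuity and (under the stronger hypothesis) continuity of $A^{-1}$ as corollaries. The recurring device will be the \emph{Minty trick}, which upgrades the weak convergence supplied by reflexivity into an honest solution of $Au=b$ using only monotonicity and hemicontinuity. First I would establish surjectivity by a Galerkin scheme: exploiting separability, fix an increasing chain of finite-dimensional subspaces $X_n$ with $\bigcup_m X_m$ dense in $X$, and on each $X_n$ solve the finite-dimensional problem.

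Concretely, fix $b\in\dualSpace{X}$. On $X_n$ the assignment $v\mapsto \dual{Au-b}{v}$ defines, through a choice of basis, a continuous field on a Euclidean space, and coercivity makes $\dual{Au-b}{u}>0$ once $\norm{u}$ is large; the acute-angle corollary of Brouwer's fixed point theorem then produces $u_n\in X_n$ with $\dual{Au_n-b}{v}=0$ for all $v\in X_n$. Testing with $v=u_n$ gives $\dual{Au_n}{u_n}=\dual{b}{u_n}$, and coercivity yields a uniform bound $\norm{u_n}\le R$, so by reflexivity a subsequence satisfies $u_n\weakto u$ and $\dual{Au_n}{u_n}=\dual{b}{u_n}\to\dual{b}{u}$. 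Now the Minty trick: for fixed $w$ monotonicity gives $\dual{Au_n-Aw}{u_n-w}\ge 0$, and passing to the weak limit (using the convergences above and $Aw$ fixed in $\dualSpace{X}$) leaves $\dual{b-Aw}{u-w}\ge 0$ for all $w$ in the dense union, hence for all $w\in X$. Substituting $w=u-tz$, dividing by $t>0$, and letting $t\downarrow 0$ while invoking hemicontinuity gives $\dual{b-Au}{z}\ge 0$ for every $z$, so $Au=b$.

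With surjectivity secured, the remaining assertions are short. Strict monotonicity forces injectivity, since $Au=Av$ implies $\dual{Au-Av}{u-v}=0$ and hence $u=v$; the same inequality shows $A^{-1}$ is strictly monotone, so $A^{-1}\colon \dualSpace{X}\to X$ is a well-defined single-valued map. Boundedness of $A^{-1}$ follows from coercivity: on a set where $\norm{u}\to\infty$ one has $\dual{Au}{u}/\norm{u}\to+\infty$, which is incompatible with any bound on $\norm[\dualSpace{X}]{Au}$. For demicontinuity, take $b_n\to b$ in $\dualSpace{X}$ and put $u_n=A^{-1}b_n$; boundedness gives $\norm{u_n}\le C$, reflexivity extracts $u_{n_k}\weakto u$, and running the Minty trick with the genuine relations $Au_n=b_n$ identifies $Au=b$, i.e.\ $u=A^{-1}b$. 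Since every subsequence admits a further subsequence with this same weak limit, the whole sequence satisfies $A^{-1}b_n\weakto A^{-1}b$.

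Finally, under uniform monotonicity I would promote weak convergence to norm convergence. With $u_n=A^{-1}b_n$, $u=A^{-1}b$ and $b_n\to b$, demicontinuity already gives $u_n\weakto u$, whence
$$
a\bracket{\norm{u_n-u}}\norm{u_n-u}\le \dual{Au_n-Au}{u_n-u}=\dual{b_n-b}{u_n-u}\to 0 ,
$$
because $b_n-b\to 0$ strongly while $\norm{u_n-u}$ stays bounded. Since $a$ is continuous, strictly increasing with $a(0)=0$, this forces $\norm{u_n-u}\to 0$, which is exactly continuity of $A^{-1}$. I expect the main obstacle to be the limit passage in the Minty step: one must carefully justify $\dual{Au_n}{u_n}\to\dual{b}{u}$ from the Galerkin identity and handle the density of $\bigcup_m X_m$, since this is the one place where monotonicity, hemicontinuity and weak convergence are combined at once.
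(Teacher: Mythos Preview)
The paper does not prove this theorem: it is quoted in the Preliminaries as a known result from Zeidler \cite[th.~26.A]{Zeidler}, without proof, and is later invoked only as a black box in Lemma~\ref{continuous_inverse}. So there is no ``paper's own proof'' to compare against.

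That said, your argument is correct and is essentially the classical proof one finds in Zeidler: Galerkin approximation on an exhausting chain of finite-dimensional subspaces, the acute-angle/Brouwer lemma to solve the finite-dimensional problems, coercivity for the uniform a~priori bound, reflexivity for a weak limit, and the Minty trick (monotonicity plus hemicontinuity along rays $w=u-tz$) to identify that limit as a genuine solution; then strict monotonicity for injectivity and strict monotonicity of $A^{-1}$, coercivity for boundedness of $A^{-1}$, the Minty trick again for demicontinuity, and finally the uniform-monotonicity inequality to upgrade weak to strong convergence. The only point worth tightening is the continuity of the finite-dimensional field used in the Brouwer step: hemicontinuity of $A$ by itself is continuity along rays, not continuity on $X_n$; the standard remedy is to note that a monotone hemicontinuous operator is demicontinuous (equivalently, locally bounded), which on a finite-dimensional $X_n$ yields ordinary continuity. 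With that caveat addressed, your proposal matches the textbook proof that the paper is citing.
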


\noindent We denote that if $A$ is uniformly monotone, then $A$ is strictly monotone, coercive and hemicontinuous.

\noindent The~following well known analytic inequality shall be helpful.
\begin{lemma}[{ \cite[page 3]{Chabrowski}}]\label{Chabrowski}
Let $p\geq2.$ For each $x,y\in \setR^n$ there occurs 
$$
(\abs{x}^{p-2} x-\abs{y}^{p-2}y)\cdot (x-y)\geq \frac{2}{p(2^{p-1}-1)}\abs{x-y}^p.
$$
\end{lemma}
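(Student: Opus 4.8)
The plan is to represent the left–hand side as a line integral along the segment joining $y$ to $x$ and then bound the resulting integrand from below. Set $\xi(t)=y+t(x-y)$ for $t\in[0,1]$ and abbreviate $w=x-y$; the case $x=y$ is trivial, so assume $w\neq 0$. Since $p\geq 2$, the field $z\mapsto\abs{z}^{p-2}z$ is of class $C^{1}$ on $\setR^{n}$, so the fundamental theorem of calculus yields
$$
\bracket{\abs{x}^{p-2}x-\abs{y}^{p-2}y}\cdot w=\int_{0}^{1}\frac{\mathrm{d}}{\mathrm{d}t}\bracket{\abs{\xi(t)}^{p-2}\xi(t)}\cdot w\dt[t].
$$
First I would differentiate the integrand, obtaining $\abs{\xi}^{p-2}w+(p-2)\abs{\xi}^{p-4}\bracket{\xi\cdot w}\xi$, and dot it with $w$ to get $\abs{\xi}^{p-2}\abs{w}^{2}+(p-2)\abs{\xi}^{p-4}\bracket{\xi\cdot w}^{2}$. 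Because $p\geq 2$ the second summand is nonnegative, so the integrand is at least $\abs{\xi(t)}^{p-2}\abs{w}^{2}$ and hence
$$
\bracket{\abs{x}^{p-2}x-\abs{y}^{p-2}y}\cdot(x-y)\geq\abs{x-y}^{2}\int_{0}^{1}\abs{y+t(x-y)}^{p-2}\dt[t].
$$

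The main obstacle is a sharp lower bound on the scalar integral $I=\int_{0}^{1}\abs{y+tw}^{p-2}\dt[t]$ in terms of $\abs{w}^{p-2}$. Since both $I$ and $\abs{w}^{p-2}$ are positively homogeneous of degree $p-2$ under the simultaneous scaling $(y,w)\mapsto(\lambda y,\lambda w)$, I would normalize $\abs{w}=1$. Decomposing $y=\alpha w+y^{\perp}$ with $y^{\perp}\perp w$ gives $\abs{y+tw}^{2}=(\alpha+t)^{2}+\abs{y^{\perp}}^{2}$, and as $\tfrac{p-2}{2}\geq 0$ the integrand is nondecreasing in $\abs{y^{\perp}}$; thus the minimum over the transversal direction is attained at $y^{\perp}=0$, reducing the estimate to the collinear integral $\int_{0}^{1}\abs{\alpha+t}^{p-2}\dt[t]=\int_{\alpha}^{\alpha+1}\abs{s}^{p-2}\dt[s]$. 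A one–variable minimization (its derivative $\abs{\alpha+1}^{p-2}-\abs{\alpha}^{p-2}$ vanishes only at $\alpha=-\tfrac12$) produces the value $\int_{-1/2}^{1/2}\abs{s}^{p-2}\dt[s]=\frac{1}{(p-1)2^{p-2}}$, whence $I\geq\frac{1}{(p-1)2^{p-2}}\abs{w}^{p-2}$.

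Combining the two displays I would arrive at $\bracket{\abs{x}^{p-2}x-\abs{y}^{p-2}y}\cdot(x-y)\geq\frac{1}{(p-1)2^{p-2}}\abs{x-y}^{p}$. It then remains to verify that this implies the stated bound, i.e. that $\frac{1}{(p-1)2^{p-2}}\geq\frac{2}{p(2^{p-1}-1)}$; clearing denominators, this is equivalent to $2^{p-1}\geq p$, which holds for every $p\geq 2$ with equality at $p=2$ (where the whole identity is in fact exact). I expect the transversal-minimization step—justifying the reduction to the collinear configuration and carrying out the scalar optimization—to be the delicate part, while the differentiation and the final elementary comparison of constants are routine.
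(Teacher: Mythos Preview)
The paper does not supply a proof of this lemma; it is quoted verbatim from \cite{Chabrowski} and used as a black box in the monotonicity argument for $\Phi'$. So there is nothing to compare against, and your write-up stands on its own.

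Your argument is correct. The integral representation along the segment, the lower bound by dropping the nonnegative term $(p-2)\abs{\xi}^{p-4}(\xi\cdot w)^{2}$, the reduction to the collinear case via monotonicity in $\abs{y^{\perp}}$, and the one-variable minimization at $\alpha=-\tfrac12$ are all sound; the resulting constant $\frac{1}{(p-1)2^{p-2}}$ is in fact the sharp one for this line-integral method, and your final comparison $2^{p-1}\ge p$ shows it dominates the constant $\frac{2}{p(2^{p-1}-1)}$ appearing in the stated lemma. One small wording point: for $2<p<3$ the map $z\mapsto\abs{z}^{p-2}z$ is $C^{1}$ on $\setR^{n}$ but the formula $\abs{\xi}^{p-4}(\xi\cdot w)\xi$ for the derivative is only literally valid away from $\xi=0$; since the segment meets the origin at most once this does not affect the integral, but it would be cleaner to note this explicitly.
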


\begin{theorem}[{\cite[th 6.30]{Adams} Poincar\`e inequality}]\label{Poincare inequality}
If domain $\Omega\subset \setR^n$ is bounded, then there exists a~constant $C_p$ dependent only on $\Omega$ and $p$, such that for all $u \in \Ckz[\infty]{\Omega}$
$$
\norm[{\Lp[p]{\Omega}}]{u} \leq C_p \norm[{\Lp[p]{\Omega;\setR^n}}]{\nabla u}. 
$$
\end{theorem}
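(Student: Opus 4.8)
The plan is to reduce the inequality to a one-dimensional estimate via the fundamental theorem of calculus, using only that $\Omega$ is bounded and that functions in $\Ckz[\infty]{\Omega}$ vanish near $\boundary\Omega$. Since $\Omega$ is bounded, I would fix $a>0$ with $\Omega\subseteq\set{x\in\setR^n : \abs{x_1}\leq a}$; the choice of the first coordinate is immaterial. Given $u\in\Ckz[\infty]{\Omega}$, I extend it by zero outside $\Omega$, obtaining a smooth compactly supported function on $\setR^n$. Writing $x=(x_1,x')$ with $x'=(x_2,\dots,x_n)$ and using that the extension vanishes for $x_1\leq -a$, the fundamental theorem of calculus gives $u(x_1,x')=\int_{-a}^{x_1}\partial_{x_1}u(t,x')\dt$.

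First I would derive a pointwise bound. Taking absolute values and enlarging the domain of integration to $[-a,a]$ yields $\abs{u(x_1,x')}\leq\int_{-a}^{a}\abs{\partial_{x_1}u(t,x')}\dt$. Applying Hölder's inequality on $[-a,a]$ with exponents $p$ and $p/(p-1)$ gives $\abs{u(x_1,x')}^p\leq(2a)^{p-1}\int_{-a}^{a}\abs{\partial_{x_1}u(t,x')}^p\dt$, the right-hand side now being independent of $x_1$.

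It then remains to integrate this bound. Integrating first in $x_1$ over $[-a,a]$ contributes a further factor $2a$, and integrating afterwards in $x'$ over $\setR^{n-1}$, together with the pointwise inequality $\abs{\partial_{x_1}u}\leq\abs{\nabla u}$, produces $\norm[{\Lp[p]{\Omega}}]{u}^p\leq(2a)^p\norm[{\Lp[p]{\Omega;\setR^n}}]{\nabla u}^p$, where I have used that $u$ and $\nabla u$ vanish outside $\Omega$ to identify the integrals over the slab with integrals over $\Omega$. Taking $p$-th roots yields the claim with $C_p=2a$, a constant depending only on $\Omega$ through the width of a slab containing it.

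I do not expect any serious obstacle: the argument is elementary once the reduction to a slab is made. The only points demanding a little care are the legitimacy of the zero extension and the vanishing of the boundary term in the fundamental theorem of calculus, both of which follow at once from $u$ having compact support strictly inside $\Omega$. Finally, although the statement is formulated for $u\in\Ckz[\infty]{\Omega}$, the same inequality passes to all $u\in\Wppz{1}{p}{\Omega}$ by density, since $\Wppz{1}{p}{\Omega}$ is by definition the closure of $\Ckz[\infty]{\Omega}$ in the Sobolev norm.
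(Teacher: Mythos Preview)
Your argument is the standard slab proof of the Poincar\'e inequality and is correct: the zero extension of $u\in\Ckz[\infty]{\Omega}$ is smooth, the one-variable fundamental theorem of calculus applies, and the H\"older step together with Fubini gives the bound with constant $2a$.

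There is, however, nothing to compare it to in the paper. Theorem~\ref{Poincare inequality} is listed in the Preliminaries section purely as a quoted result from Adams and carries no proof of its own; the authors simply invoke it in order to replace the Sobolev norm on $\Wppz{1}{p}{\Omega}$ by the gradient norm. Your proposal therefore supplies a valid elementary proof where the paper relies on the cited reference.
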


\noindent Since Poincar\`e inequality holds, in this paper, we shall use $\Wppz{1}{p}{\Omega}$ given with a~norm equivalent to Sobolev's one, i.e. 
$$
\norm[{\Wppz{1}{p}{\Omega}}]{x}=\norm[{\Lp[p]{\Omega}}]{\nabla x}.
$$

\begin{theorem}[{\cite[th 6.3]{Adams} Rellich–Kondrachov theorem}]\label{preliminaries.theorem.rellich-kondrachov}
Let $\Omega \subset \setR^n$ be an open, bounded Lipschitz domain, and let $1 \leq p <n$. Set $p^{\ast} = \frac{np}{n-p}$. Then the~Sobolev space $\Wppz{1}{p}{\Omega}$ is continuously embedded in $\Lp[p{^\ast}]{\Omega}$ and compactly embedded in $\Lp[q]{\Omega}$, where $1 \leq q < p^\ast$.
\begin{equation*}
\Wppz{1}{p}{\Omega} \continuousembedding \Lp[p^{\ast}]{\Omega},
\end{equation*}
and
\begin{equation*}
\Wppz{1}{p}{\Omega} \compactembedding \Lp[q]{\Omega} \mbox{ for } 1\leq q < p^\ast.
\end{equation*}
\end{theorem}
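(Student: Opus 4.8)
The plan is to prove the two assertions separately: first I would establish the continuous embedding into $\Lp[p^\ast]{\Omega}$ through the Gagliardo--Nirenberg--Sobolev inequality, and then bootstrap it to the compactness statement by combining a translation (equicontinuity) estimate with an interpolation argument. Throughout I would argue first for $u \in \Ckz[\infty]{\Omega}$, extended by zero to all of $\setR^n$, and pass to $\Wppz{1}{p}{\Omega}$ at the end by density.

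For the continuous embedding I would prove the scale-invariant inequality $\norm[{\Lp[p^\ast]{\Omega}}]{u} \leq C \norm[{\Lp{\Omega}}]{\nabla u}$. The base case $p=1$, with exponent $p^\ast = n/(n-1)$, follows from the pointwise bound $\abs{u(x)} \leq \int_{\setR} \abs{\partial_i u}\,\mathrm{d}x_i$ in each coordinate direction; multiplying the $n$ resulting inequalities and applying the generalized H\"older inequality iteratively yields $\norm[{\Lp[n/(n-1)]{}}]{u} \leq \prod_{i=1}^n \norm[{\Lp[1]{}}]{\partial_i u}^{1/n} \leq \norm[{\Lp[1]{}}]{\nabla u}$. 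The general case $1 < p < n$ is then obtained by applying this to the power $\abs{u}^{\gamma}$ with $\gamma = \frac{p(n-1)}{n-p}$ and absorbing the surplus factor by one more use of H\"older's inequality, after which division gives $\norm[{\Lp[p^\ast]{\Omega}}]{u} \leq C \norm[{\Lp{\Omega}}]{\nabla u}$. Density of $\Ckz[\infty]{\Omega}$ in $\Wppz{1}{p}{\Omega}$ extends this to the whole space, which is precisely $\Wppz{1}{p}{\Omega} \continuousembedding \Lp[p^\ast]{\Omega}$.

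For the compact embedding I would take a bounded sequence $(u_k)_k$ in $\Wppz{1}{p}{\Omega}$, extend each term by zero, and mollify, setting $u_k^\epsilon = \eta_\epsilon \ast u_k$ for a standard mollifier $\eta_\epsilon$. The decisive estimate is the uniform translation bound $\norm[{\Lp[1]{}}]{u_k^\epsilon - u_k} \leq C\epsilon \, \norm[{\Lp[1]{}}]{\nabla u_k}$, read off from $u_k(x+h)-u_k(x) = \int_0^1 \nabla u_k(x+th)\cdot h \,\mathrm{d}t$; this is uniform in $k$ because the gradients are bounded in $\Lp{\Omega}$, hence in $\Lp[1]{\Omega}$ on the bounded domain $\Omega$. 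For each fixed $\epsilon$ the family $(u_k^\epsilon)_k$ is uniformly bounded and equicontinuous, with supports in a fixed bounded set, so Arzel\`a--Ascoli provides a subsequence converging uniformly, and a fortiori in $\Lp[1]{\Omega}$. A diagonal extraction as $\epsilon \to 0$, together with the uniform closeness of $u_k^\epsilon$ to $u_k$, yields a subsequence that is Cauchy in $\Lp[1]{\Omega}$.

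Finally I would promote $\Lp[1]{}$ convergence to $\Lp[q]{}$ convergence for $1 \leq q < p^\ast$ by interpolation, estimating $\norm[{\Lp[q]{}}]{w} \leq \norm[{\Lp[1]{}}]{w}^{1-\theta}\,\norm[{\Lp[p^\ast]{}}]{w}^{\theta}$ for the appropriate $\theta \in (0,1)$ and controlling the $\Lp[p^\ast]{}$ factor uniformly by the continuous embedding already proved. I expect the main obstacle to be the compactness step, namely making the mollification estimate genuinely uniform over the bounded sequence and running the diagonal argument cleanly, since the Sobolev inequality itself, though technical in the base case, is a self-contained computation.
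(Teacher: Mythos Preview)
Your sketch is correct and follows the classical route (Gagliardo--Nirenberg--Sobolev via Nirenberg's iterated H\"older argument for the continuous embedding, then mollification combined with Arzel\`a--Ascoli and $L^1$--$L^{p^\ast}$ interpolation for the compactness). The individual steps are sound: the translation estimate $\norm[{\Lp[1]{}}]{u_k^\epsilon - u_k} \leq \epsilon\,\norm[{\Lp[1]{}}]{\nabla u_k}$ is justified for $\Wppz{1}{p}{\Omega}$ functions after zero extension, the uniform $\Lp[1]{}$ bound on $\nabla u_k$ follows from H\"older on the bounded domain, and the interpolation inequality closes the argument for every $q \in [1,p^\ast)$.

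There is, however, nothing to compare against: the paper does not prove this theorem at all. It is stated in the preliminaries with a citation to Adams and used as a black box (for instance in Lemmas~\ref{compact_dericative_J1}, \ref{compact_derivative_J2}, \ref{lim_u_to_0_<0}, and \ref{lim_u_to_infty_<0}). So your proposal supplies a self-contained proof where the paper simply quotes the literature; what you have written is essentially the textbook argument one finds in Evans or Br\'ezis, and it would serve perfectly well if a proof were required.
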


\noindent By above theorem we immediately obtain the~following:
$$
\Wppz{1}{p}{\Omega}\compactembedding \Lp[p]{\Omega}.
$$ 

\noindent The~theory of Ricceri is easily applicable to problems connected to a~norm that is an uniformly convex one. Thus we recall some classical results of Clarkson \cite{Clarkson}.
\begin{definition}[{\cite[def 1]{Clarkson}} Uniformly convex space]
A Banach space $B$ will be said to be uniformly convex if to each $\epsilon$, $0<\epsilon\leq 2$, there corresponds a~$\delta(\epsilon)>0$ such that the~conditions
$$
\norm{x}=\norm{y}=1,\quad \norm{x-y}\geq \epsilon
$$
imply
$$
\norm{\frac{x+y}{2}}\leq 1-\delta(\epsilon).
$$
\end{definition}
In this paper we shall use two uniformly convex spaces, $\Wppz{1}{p}{\Omega}$ and $\Lp[p]{\abs{x}^{-p}; \Omega}$, simultaneously. Thus the~following result concerning the~product of uniformly convex spaces is required.

\begin{definition}[{\cite{Clarkson}} Uniformly convex product]
Let $N(a_1, a_2, \cdots, a_k)$ be a~non-negative continuous function of the~non-negative variables $a_i$. We say that $N$ is
\begin{enumerate}
\item homogeneous, if for $c \geq 0$
$$
N(ca_1, ca_2, \cdots , ca_k)=cN(a_1,a_2,\cdots, a_k);
$$
\item strictly convex, if
$$
N(a_1+b_1, a_2+b_2, \cdots, a_k+b_k)<N(a_1,a_2,\cdots, a_k)+N(b_1, b_2, \cdots, b_k),
$$
unless $a_i=cb_i\: (i=1,2,\cdots, k)$. In the~latter case we have equality by condition 1.
\item strictly increasing, if it is strictly increasing in each variable separately. 
\end{enumerate}
A familiar example of a~function N satisfying these conditions is 
$$
N(a_1,\ldots,a_k)=\bracket{\sum\limits^k_{i=1} a_i^p}^{\frac{1}{p}}\: (p>1);
$$ 
here condition 2. becomes the~inequality of Minkowski.

\noindent Suppose now that a~finite number of Banach spaces $B_1, B_2, \cdots, B_k$ are given, and that $B$ is their product. We shall call B a~\textit{uniformly convex product} of $B_i$ if the~norm of an element $x=(x^1, x^2, \cdots, x^k)$ of $B$ is defined by
$$
\norm[]{x}=N(\norm[]{x^1},\norm[]{x^2}, \cdots, \norm[]{x^k}),
$$ 
where $N$ is a~continuous non-negative function satisfying the~conditions 1-3.
\end{definition}

\begin{theorem}[{\cite{Clarkson} Clarkson theorem}]\label{Clarkson}
The uniformly convex product of a~finite number of uniformly convex Banach spaces is a~uniformly convex Banach space.
\end{theorem}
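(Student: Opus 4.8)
The statement has two parts: that $B$ is a Banach space and that it is uniformly convex. The first I would dispatch quickly, since it is essentially routine. By homogeneity and continuity, $N$ is comparable to the max-norm on $\setR^k$: restricting $N$ to the compact set $\set{a\in\setR^k_{\ge 0} : \max_i a_i = 1}$ gives positive finite bounds $c_1 \le N \le c_2$ there, and homogeneity propagates these to all of $\setR^k_{\ge 0}$, so the product norm $x\mapsto N(\norm{x^1},\dots,\norm{x^k})$ is equivalent to $x\mapsto \max_i \norm{x^i}$; as each $B_i$ is complete, so is the product. The same conditions 1--3 show $\norm{\cdot}$ really is a norm (absolute homogeneity from condition 1, the triangle inequality from conditions 2 and 3, definiteness from strict monotonicity together with $N(0,\dots,0)=0$). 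The genuine content is uniform convexity, which I would prove by contradiction using the sequential form of the definition.

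Assume $B$ is not uniformly convex. Then there exist $\epsilon>0$ and sequences $(x_m),(y_m)$ with $\norm{x_m}=\norm{y_m}=1$, $\norm{x_m-y_m}\ge\epsilon$, yet $\norm{(x_m+y_m)/2}\to 1$. Writing $x_m=(x_m^1,\dots,x_m^k)$ and setting $a_m^i=\norm{x_m^i}$, $b_m^i=\norm{y_m^i}$, these scalars are bounded (by the comparison with the max-norm), so after passing to a subsequence I may assume $a_m^i\to a^i$, $b_m^i\to b^i$, and $\norm{(x_m^i+y_m^i)/2}\to \ell^i$ for each $i$. Continuity of $N$ gives $N(a)=N(b)=N(\ell)=1$. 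Using $\norm{(x_m^i+y_m^i)/2}\le (a_m^i+b_m^i)/2$, monotonicity of $N$, and then subadditivity and homogeneity (condition 2 applied to $a/2$ and $b/2$), I obtain $1=N(\ell)\le N((a+b)/2)\le \tfrac12 N(a)+\tfrac12 N(b)=1$, so both inequalities are equalities. The equality in the strict-convexity step forces $a^i=c\,b^i$ for a common $c$, and since $N(a)=N(b)=1$ with $N$ homogeneous, $c=1$ and $a^i=b^i$ for all $i$; the equality in the monotonicity step, via strict monotonicity, forces $\ell^i=(a^i+b^i)/2=a^i$.

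It remains to transfer this to the factors. Fix $i$. If $a^i=0$, then $\norm{x_m^i-y_m^i}\le a_m^i+b_m^i\to 0$. If $a^i=\alpha>0$, then for large $m$ I normalize $u_m=x_m^i/a_m^i$ and $v_m=y_m^i/b_m^i$; a short estimate shows $\alpha\norm{(u_m+v_m)/2}$ differs from $\norm{(x_m^i+y_m^i)/2}\to\alpha$ by a null sequence, hence $\norm{(u_m+v_m)/2}\to 1$ with $\norm{u_m}=\norm{v_m}=1$. Uniform convexity of $B_i$ then yields $\norm{u_m-v_m}\to 0$ (otherwise a subsequence stays $\ge\epsilon'$ and the modulus $\delta(\epsilon')$ keeps the midpoint norm below $1$), whence $\norm{x_m^i-y_m^i}\to 0$ as well. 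Thus every component difference tends to zero, and continuity of $N$ with $N(0,\dots,0)=0$ gives $\norm{x_m-y_m}=N(\norm{x_m^1-y_m^1},\dots)\to 0$, contradicting $\norm{x_m-y_m}\ge\epsilon$. The main obstacle is exactly the middle step: descending from the product-level hypothesis $\norm{(x_m+y_m)/2}\to 1$ to the factor-level statements $a^i=b^i$ and $\ell^i=a^i$. This is where all three structural properties of $N$ are indispensable --- homogeneity to normalize, the equality case of strict convexity to pin down proportionality, and strict monotonicity to prevent any component's midpoint norm from lagging behind --- after which the uniform convexity of each $B_i$ applies essentially verbatim.
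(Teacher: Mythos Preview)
The paper does not supply its own proof of this theorem; it is quoted as a classical result of Clarkson and used as a black box in the proof of Lemma~\ref{uniformly_convex}. There is therefore nothing in the paper to compare your argument against.

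That said, your proof is correct. The sequential-contradiction strategy is standard and you carry it out cleanly: the passage to limits $a^i,b^i,\ell^i$ via compactness, the chain $1=N(\ell)\le N((a+b)/2)\le\tfrac12 N(a)+\tfrac12 N(b)=1$, and the extraction of the two equality cases using strict convexity and strict monotonicity of $N$ are all sound. The normalization step in each factor, where you estimate $\bigl|\,\alpha\norm{(u_m+v_m)/2}-\norm{(x_m^i+y_m^i)/2}\,\bigr|\le\tfrac12|\alpha-a_m^i|+\tfrac12|\alpha-b_m^i|\to 0$, is exactly the right way to feed the factor-level data into the uniform convexity of $B_i$. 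The only place worth a second look is the Banach-space preamble: you need $N>0$ on the set $\{\max_i a_i=1\}$ to get $c_1>0$, and this indeed follows from $N(0,\dots,0)=0$ together with strict monotonicity, as you note.
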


\noindent We shall also need a~relation in between Sobolev space $\Wppz{1}{p}{\Omega}$ and weighted $\Lp[p]{\abs{x}^{-p},\Omega}$ space.

\begin{theorem}[{\cite{Azorero} Hardy inequality}]
Assume $1<p<n$ and $u\in \Wppz{1}{p}{\Omega}$, then \\\mbox{$u \in \Lp[p]{\abs{x}^{-p};\Omega}$} and
$$
\int\limits_{\Omega}\frac{\abs{u(x)}^p}{\abs{x}^p} \dt[x] \leq C_{n,p} \int\limits_{\Omega}\abs{\nabla u(x)}^p \dt[x],
$$
with $C_{n,p}=\bracket{\frac{p}{n-p}}^p$.
\end{theorem}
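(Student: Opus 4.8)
The plan is to establish the inequality first for test functions $u \in \Ckz[\infty]{\Omega}$ and then extend to all of $\Wppz{1}{p}{\Omega}$ by density. The engine of the argument is a divergence identity for the vector field $x \mapsto \abs{x}^{-p}x$: a direct computation using $\nabla\bracket{\abs{x}^{-p}} = -p\abs{x}^{-p-2}x$ gives $\operatorname{div}\bracket{\abs{x}^{-p}x} = (n-p)\abs{x}^{-p}$, so that formally
$$
(n-p)\int\limits_{\Omega} \frac{\abs{u(x)}^p}{\abs{x}^p}\dt[x] = \int\limits_{\Omega} \abs{u(x)}^p\,\operatorname{div}\bracket{\abs{x}^{-p}x}\dt[x].
$$
Integrating by parts, invoking $\nabla\bracket{\abs{u}^p} = p\abs{u}^{p-2}u\,\nabla u$ and the identity $\abs{\abs{x}^{-p}x} = \abs{x}^{-(p-1)}$, and taking absolute values produces
$$
(n-p)\int\limits_{\Omega} \frac{\abs{u(x)}^p}{\abs{x}^p}\dt[x] \leq p\int\limits_{\Omega} \frac{\abs{u(x)}^{p-1}}{\abs{x}^{p-1}}\abs{\nabla u(x)}\dt[x].
$$

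Next I would apply H\"older's inequality on the right with the conjugate exponents $\frac{p}{p-1}$ and $p$, pairing $\abs{u}^{p-1}\abs{x}^{-(p-1)}$ with $\abs{\nabla u}$. Abbreviating $I = \int_{\Omega} \abs{u}^p\abs{x}^{-p}\dt[x]$ and $J = \int_{\Omega} \abs{\nabla u}^p\dt[x]$, this yields $(n-p)\,I \leq p\,I^{(p-1)/p}J^{1/p}$. Because $u$ is smooth with compact support and $p<n$, the integral $I$ is finite — the singular weight $\abs{x}^{-p}$ is integrable near the origin precisely because $n-p>0$ — so I may divide by $I^{(p-1)/p}$ and raise to the power $p$ to obtain $I \leq \bracket{\frac{p}{n-p}}^p J$, which is the desired bound with $C_{n,p} = \bracket{\frac{p}{n-p}}^p$.

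The main obstacle is that the vector field is singular at the origin, which lies in $\Omega$, so the integration by parts above is only formal. I would make it rigorous by integrating over $\Omega \setminus B_\epsilon(0)$ and letting $\epsilon \to 0^+$. The inner boundary contribution over $\boundary B_\epsilon(0)$ equals $\epsilon^{-(p-1)}\int_{\boundary B_\epsilon(0)}\abs{u}^p\dt[S]$ up to sign; since $\abs{u}$ is bounded and the sphere has area of order $\epsilon^{n-1}$, this term is of order $\epsilon^{n-p}$, hence vanishes as $\epsilon\to 0$ because $n>p$. The outer boundary term on $\boundary\Omega$ vanishes since $u$ has compact support, so the limit recovers the identity with no boundary remainder.

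Finally, to pass from $\Ckz[\infty]{\Omega}$ to a general $u \in \Wppz{1}{p}{\Omega}$, I would pick $u_k \in \Ckz[\infty]{\Omega}$ with $u_k \to u$ in $\Wppz{1}{p}{\Omega}$. Applying the established inequality to the differences $u_k - u_j$ shows that $\bracket{u_k}$ is Cauchy in $\Lp[p]{\abs{x}^{-p};\Omega}$ and therefore convergent there; passing to an almost everywhere convergent subsequence identifies the limit as $u$, so $u \in \Lp[p]{\abs{x}^{-p};\Omega}$. The inequality then survives the passage to the limit by the continuity of both sides with respect to the respective norms, completing the proof.
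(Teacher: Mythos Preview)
Your argument is correct and is in fact the standard proof of the Hardy inequality: the divergence identity $\operatorname{div}\bracket{\abs{x}^{-p}x}=(n-p)\abs{x}^{-p}$, integration by parts with the $\epsilon$-ball excision to handle the singularity at the origin, H\"older with conjugate exponents $\frac{p}{p-1}$ and $p$, and the density extension all go through as you describe. One minor remark: the boundary term over $\partial B_\epsilon(0)$ actually carries a favourable sign (it is nonpositive when moved to the right-hand side), so you could simply discard it rather than estimate it; but your estimate $O(\epsilon^{n-p})\to 0$ is equally valid.

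There is nothing to compare against in the paper: this theorem is quoted from \cite{Azorero} as a known preliminary result and is not proved in the paper itself. Your write-up supplies exactly the classical argument behind the cited inequality.
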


\noindent Finally we shall also require the~following topological lemma:
\begin{lemma}\label{metric lemma}
Let $d_1, d_2$ be metrics on metric space $X$. Then $d(x,y)=\sqrt[p]{d_1(x,y)^p+d_2(x,y)^p}$ is also a~metric on $X$.
\end{lemma}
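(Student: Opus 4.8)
The plan is to verify directly that the proposed $d(x,y)=\sqrt[p]{d_1(x,y)^p+d_2(x,y)^p}$ satisfies the three axioms of a metric: non-negativity with the identity of indiscernibles, symmetry, and the triangle inequality. The first two are immediate and require no real work, so the whole substance of the lemma is concentrated in the triangle inequality. I would dispose of the easy parts in one sentence each and then commit the main effort to the triangle inequality.

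For non-negativity and the identity of indiscernibles, I would observe that since $d_1,d_2\geq 0$, the radicand $d_1(x,y)^p+d_2(x,y)^p$ is non-negative and its $p$-th root is well defined and non-negative; moreover it equals $0$ if and only if both $d_1(x,y)=0$ and $d_2(x,y)=0$, which by the metric property of $d_1$ (or $d_2$) forces $x=y$, and conversely $d(x,x)=0$. Symmetry follows at once from the symmetry of $d_1$ and $d_2$, since the defining expression is built symmetrically from them.

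The triangle inequality is the heart of the matter, and the natural tool is the Minkowski inequality in $\setR^2$ with exponent $p\geq 1$. Fix $x,y,z\in X$. The plan is to apply the triangle inequalities for $d_1$ and $d_2$ coordinatewise, writing $d_i(x,z)\leq d_i(x,y)+d_i(y,z)$ for $i=1,2$, and then to combine them. Concretely, I would introduce the vectors $u=\bigl(d_1(x,y),d_2(x,y)\bigr)$ and $v=\bigl(d_1(y,z),d_2(y,z)\bigr)$ in $\setR^2$ equipped with the $\ell^p$-norm $\norm[p]{\cdot}$, so that $d(x,z)=\sqrt[p]{d_1(x,z)^p+d_2(x,z)^p}$. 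Since $t\mapsto t^{1/p}$ is increasing on $\setR_+$ and the map $(a_1,a_2)\mapsto(a_1^p+a_2^p)^{1/p}$ is monotone in each non-negative coordinate, the pointwise bounds $d_i(x,z)\leq d_i(x,y)+d_i(y,z)$ give
$$
d(x,z)\leq \sqrt[p]{\bigl(d_1(x,y)+d_1(y,z)\bigr)^p+\bigl(d_2(x,y)+d_2(y,z)\bigr)^p}=\norm[p]{u+v}.
$$
Minkowski's inequality then yields $\norm[p]{u+v}\leq\norm[p]{u}+\norm[p]{v}=d(x,y)+d(y,z)$, which is exactly the desired triangle inequality.

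The only genuine obstacle is the combination step: passing from the two separate triangle inequalities to a single inequality for the $\ell^p$-combination. This is precisely where monotonicity of the $\ell^p$-norm in its non-negative arguments is needed, together with Minkowski's inequality, and where the hypothesis $p\geq 1$ is essential (indeed the paper works with $p\geq 2$). I would emphasize that Minkowski's inequality is exactly the content of strict convexity of the function $N$ noted in the Clarkson product definition above, so this lemma can be seen as the concrete two-metric instance of that general principle.
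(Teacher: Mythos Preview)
Your proposal is correct and follows essentially the same route as the paper: after dispatching the identity of indiscernibles and symmetry, you establish the triangle inequality by applying the individual triangle inequalities for $d_1,d_2$, using the coordinatewise monotonicity of the $\ell^p$-combination, and then invoking Minkowski's inequality in $\setR^2$. This is exactly the paper's argument (it writes the $\ell^p$-norm as $\norm[{\Lp[p]{R^2}}]{\cdot}$); your version is in fact slightly more explicit about the monotonicity step that the paper leaves implicit.
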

\begin{proof}
We shall prove metric axioms are satisfied.
\begin{itemize}
\item[M1)] Let $x=y$. Then $d(x,y)=d(x,x)=\sqrt[p]{d_1(x,x)^p+d_2(x,x)^p}=\sqrt[p]{0^p+0^p}=\sqrt[p]{0}=0$. Conversely $\distance{x}{y}=0$ then $\distance[d_1]{x}{y}=0$. Hence $x=y$.
\item[M2)] $d(x,y)=\sqrt[p]{d_1(x,y)^p+d_2(x,y)^p}=\sqrt[p]{d_1(y,x)^p+d_2(y,x)^p}=d(y,x)$. Hence $d$ is symmetric.
\item[M3)] Let $x,y,z\in X$. Then 
\begin{align*}
d(x,z)& =\sqrt[p]{d_1(x,z)^p+d_2(x,z)^p}\\
	  &	\leq \sqrt[p]{(d_1(x,y)+d_1(y,z))^p+(d_2(x,y)+d_2(y,z))^p}\\
      & =\norm[{\Lp[p]{R^2}}]{(d_1(x,y),d_2(x,y))+(d_1(y,z),d_2(y,z))}\\
      & \leq  \norm[{\Lp[p]{R^2}}]{d_1(x,y),d_2(x,y)}+ \norm[{\Lp[p]{R^2}}]{d_1(y,z),d_2(y,z)}\\
      & =\sqrt[p]{d_1(x,y)^p+d_2(x,y)^p}+\sqrt[p]{d_1(y,z)^p+d_2(y,z)^p}=d(x,y)+d(y,z).      
\end{align*}
\end{itemize} 
Thus $d$ is a~metric.
\end{proof}

\section{Variational Framework}
In this paper we prove the~existence of weak solutions of problem corresponding to Problem \ref{problem.classical.solution}, namely:
\begin{problem}\label{problem.weak.solution}
Find $u \in \Wppz{1}{p}{\Omega}$ such that for all $v \in \Wppz{1}{p}{\Omega}$
\begin{align*}
\int\limits_{\Omega} \abs{\nabla u(x)}^{p-2} \nabla u(x) \nabla v(x) + \mu \frac{\abs{u(x)}^{p-2} u(x) v(x)}{\abs{x}^p}\dt[x]\\
=\lambda \int\limits_{\Omega} f(u(x)) v(x)\dt[x] + \gamma \int\limits_{\Omega} g(u(x)) v(x)\dt[x],
\end{align*}
where $\Omega\subseteq \setR^n$ is bounded, has Lipschitz boundary, $0 \in \Omega$ and $2 \leq p < n$.
\end{problem}

\noindent We assume that $\mu \in(0,+\infty)$. We also require certain conditions on continuous functions \\$f,g \colon \setR \to \setR$, namely
\begin{description}
\item[{(f1)}\label{condition.f1}] $\lim\limits_{\abs{t}\to 0}\frac{f(t)}{\abs{t}^{p-1}}=0$. 
\item[{(f2)}\label{condition.f2}] $\lim\limits_{\abs{t}\to +\infty}\frac{f(t)}{\abs{t}^{p-1}}=0$. 
\item[{(f3)}\label{condition.f3}] $\sup\limits_{t\in \setR} F(t)>0$, where $F(t)=\int\limits^{t}_{0} f(s) ds$.
\item[{(g1)}\label{condition.g1}] There exists $c_g$ and $1<q<\frac{pn}{n-p}$ such that 
$$
\abs{g(t)}\leq c_g (1+\abs{t}^{q-1}) \text{ for all } t\in\setR\ .
$$
\end{description}

\noindent We define the~following functionals, $\Phi, J_1, J_2, E\colon  \Wppz{1}{p}{\Omega} \to \setR$ given by the~formulas
\begin{align*}
& \Phi(u)=\frac{1}{p} \int\limits_{\Omega} \abs{\nabla u(x)}^p + \mu \frac{\abs{u(x)}^p}{\abs{x}^p} \dt[x], \\
& J_1(u)=\int\limits_{\Omega} F(u(x)) \dt[x],\\
& J_2(u)=\int\limits_{\Omega} G(u(x)) \dt[x],\\
& E(u)=\Phi (u)-\lambda J_1(u)-\gamma J_2(u),
\end{align*}
where $G(t)=\int\limits^{t}_{0} g(s) ds$. 

We start by proving that any critical point of $E$ is a~weak solution of Problem \ref{problem.weak.solution}.
\begin{lemma}\label{well_defined_functionals}
Assume, that conditions \ref{condition.f1}, \ref{condition.f2} and \ref{condition.g1} hold. Then functionals $\Phi$, $J_1$ and $J_2$ are well defined and they have the~following G\^ateaux derivatives:
\begin{align*}
& \dual{\Phi'(u)}{v}=\int\limits_{\Omega} \abs{\nabla u(x)}^{p-2} \nabla u(x) \nabla v(x) + \mu \frac{\abs{u(x)}^{p-2} u(x) v(x)}{\abs{x}^p} \dt[x], \\
& \dual{J_1'(u)}{v}= \int\limits_{\Omega} f(u(x)) v(x) \dt[x], \\
& \dual{J_2'(u)}{v}= \int\limits_{\Omega} g(u(x)) v(x) \dt[x].
\end{align*}
\end{lemma}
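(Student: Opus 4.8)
The plan is to separate the two claims---well-definedness and the computation of the G\^ateaux derivatives---and within each to dispose of the three functionals in turn. For well-definedness I would split $\Phi$ into its gradient part $\frac{1}{p}\int_\Omega \abs{\nabla u(x)}^p \dt[x]$, which equals $\frac{1}{p}\norm{u}^p$ in the working norm and is therefore finite, and its singular part $\frac{\mu}{p}\int_\Omega \frac{\abs{u(x)}^p}{\abs{x}^p}\dt[x]$, which is finite by the Hardy inequality. For $J_1$ I would first extract from \ref{condition.f1}, \ref{condition.f2} and the continuity of $f$ the growth bound $\abs{f(t)}\leq C\bracket{1+\abs{t}^{p-1}}$ (use \ref{condition.f1} near $0$, \ref{condition.f2} near $\infty$, and continuity on the intervening compact set), integrate it to $\abs{F(t)}\leq C'\bracket{\abs{t}+\abs{t}^p}$, and conclude $\int_\Omega \abs{F(u(x))}\dt[x]<\infty$ from the embedding $\Wppz{1}{p}{\Omega}\continuousembedding\Lp[p]{\Omega}$. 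The argument for $J_2$ is identical: \ref{condition.g1} gives $\abs{G(t)}\leq C\bracket{\abs{t}+\abs{t}^q}$, and finiteness follows from $\Wppz{1}{p}{\Omega}\compactembedding\Lp[q]{\Omega}$, which holds by Theorem \ref{preliminaries.theorem.rellich-kondrachov} since $q<p^\ast$.

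For the derivatives I would in every case form the difference quotient of the functional at $u$ in the direction $v$, e.g. $\frac{1}{s}\bracket{\Phi(u+sv)-\Phi(u)}$, apply the mean value theorem pointwise to rewrite the integrand as the relevant derivative evaluated at an intermediate argument $u+\theta s v$ (with $\theta\in(0,1)$ depending on $x$ and $s$), and then let $s\to 0$ under the integral sign by dominated convergence. For $J_1$ the integrand becomes $f(u+\theta s v)\,v$, which tends pointwise to $f(u)v$ by continuity and, for $\abs{s}\leq 1$, is dominated by $C\bracket{1+\bracket{\abs{u}+\abs{v}}^{p-1}}\abs{v}$; the elementary bound $\bracket{\abs{u}+\abs{v}}^{p-1}\leq 2^{p-2}\bracket{\abs{u}^{p-1}+\abs{v}^{p-1}}$ together with H\"older's inequality makes this dominating function integrable, which yields $\dual{J_1'(u)}{v}=\int_\Omega f(u(x))v(x)\dt[x]$. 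The same scheme, with $q$ in place of $p$ and subadditivity used instead of convexity when $q<2$, handles $J_2$.

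The main obstacle is the singular term of $\Phi'$. After the mean value theorem its difference-quotient integrand is $\frac{\mu}{\abs{x}^p}\abs{u+\theta sv}^{p-2}\bracket{u+\theta sv}\,v$, so the natural dominating function contains the cross term $\frac{\abs{u}^{p-1}\abs{v}}{\abs{x}^p}$. To see this is integrable I would write it as $\bracket{\abs{u}/\abs{x}}^{p-1}\bracket{\abs{v}/\abs{x}}$ and apply H\"older's inequality with exponents $\frac{p}{p-1}$ and $p$, bounding it by $\norm[{\Lp[p]{\abs{x}^{-p};\Omega}}]{u}^{p-1}\norm[{\Lp[p]{\abs{x}^{-p};\Omega}}]{v}$, both finite by Hardy; the remaining term $\frac{\abs{v}^p}{\abs{x}^p}$ is controlled by Hardy directly. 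The gradient part of $\Phi'$ is the easy case of the same recipe, with dominating function $2^{p-2}\bracket{\abs{\nabla u}^{p-1}+\abs{\nabla v}^{p-1}}\abs{\nabla v}$ integrated by H\"older.

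Finally, each of the three derivative formulas is manifestly linear in $v$, and the H\"older and Hardy estimates assembled above show it is bounded by a constant (depending on $u$) times $\norm{v}$; hence $\Phi'(u)$, $J_1'(u)$ and $J_2'(u)$ are genuine elements of the dual space $\dualSpace{\Wppz{1}{p}{\Omega}}$, which completes the proof.
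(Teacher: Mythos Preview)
Your proposal is correct and follows the same overall scheme as the paper: form the difference quotient and pass to the limit under the integral sign via dominated convergence, using the growth bound on $f$ extracted from \ref{condition.f1}--\ref{condition.f2} and the bound on $g$ from \ref{condition.g1}. The only noteworthy differences are cosmetic or in level of detail. First, the paper bounds the $J_1$ difference quotient by writing $\frac{F(u+\lambda v)-F(u)}{\lambda}=\frac{1}{\lambda}\int_u^{u+\lambda v} f(s)\dt[s]$ and then taking the supremum of $\abs{s}^{p-1}$ over the interval, whereas you invoke the mean value theorem to get $f(u+\theta s v)\,v$; the resulting dominants are the same. Second, the paper uses the slightly sharper bound $\abs{f(t)}\leq c_f\abs{t}^{p-1}$ (which also follows from \ref{condition.f1}, \ref{condition.f2} and continuity, since $\abs{f(t)}/\abs{t}^{p-1}$ is bounded on any compact set not containing $0$), while your $\abs{f(t)}\leq C(1+\abs{t}^{p-1})$ is perfectly adequate. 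Third, and to your credit, the paper simply exchanges limit and integral for $\Phi'$ without comment, whereas you supply the missing domination argument for the singular term $\frac{\abs{u}^{p-1}\abs{v}}{\abs{x}^p}$ via H\"older in the weighted space and Hardy; you also verify explicitly that each derivative lies in $\dualSpace{\Wppz{1}{p}{\Omega}}$, which the paper omits.
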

\begin{proof}
All of the~functional are well-defined.\\
\noindent We calculate the~G\^ateaux derivative:
\begin{align*}
\dual{\Phi'(u)}{v}=& \lim\limits_{\lambda \downarrow 0} \frac{\frac{1}{p} \int\limits_{\Omega} \abs{\nabla \bracket{u(x)+ \lambda v(x)}}^p + \mu \frac{\abs{u(x)+\lambda v(x)}^p}{\abs{x}^p}-\abs{\nabla u(x)}^p-\mu \frac{\abs{u(x)}^p}{\abs{x}^p}\dt[x]}{\lambda}\\
=&\frac{1}{p} \int\limits_{\Omega} \lim\limits_{\lambda \downarrow 0} \frac{\abs{ \nabla u(x)+\lambda\nabla v(x))}^p -\abs{\nabla u(x)}^p}{\lambda} +\frac{\mu}{\abs{x}^p}\lim\limits_{\lambda\downarrow 0}\frac{\abs{u(x) + \lambda v(x)}^p - \abs{u(x)}^p}{\lambda}\dt[x]\\
=& \frac{1}{p}\int\limits_{\Omega} p \abs{\nabla u(x)}^{p-2} \nabla u(x) \nabla v(x) + \frac{\mu}{\abs{x}^p}  p  \abs{u(x)}^{p-2} u(x) v(x) \dt[x].
\end{align*}
The following equality holds due to Lebesgue's dominated convergence theorem.
\begin{align*}
\dual{J_1'(u)}{v}=&\lim\limits_{\lambda \downarrow 0} \frac{\int\limits_{\Omega} F(u(x)+\lambda v(x))-F(u(x))\dt[x]}{\lambda}= \int\limits_{\Omega}\lim\limits_{\lambda \downarrow 0} \frac{ F(u(x)+\lambda v(x))-F(u(x))\dt[x]}{\lambda}\\
=&\int\limits_{\Omega} \lim\limits_{v(x)\lambda\to 0} \frac{F(u(x)+\lambda v(x))-F(u(x))}{\lambda v(x)}\dt[x]=\int\limits_{\Omega} f(u(x)) v(x) \dt[x].
\end{align*}
The dominated convergence theorem is applicable since,
we can estimate the function $\frac{F(u(x)+\lambda v(x))-F(u(x))}{\lambda}$ from above in a following way
\begin{align*}
\abs{f_\lambda (x)}
=&
\abs{\frac{F(u(x)+\lambda v(x))-F(u(x))}{\lambda}}
=
\abs{\frac{\int\limits_{0}^{u(x)+\lambda v(x)}f(s)\dt[s]-\int\limits_{0}^{u(x)} f(s) \dt[s]}{\lambda}}\\
\leq &
\frac{\abs{\int\limits_{u(x)}^{u(x)+\lambda v(x)}\abs{f(s)}\dt[s]}}{\lambda} 
\leq 
\frac{\abs{\int\limits_{u(x)}^{u(x)+\lambda v(x)}\abs{c_f\abs{s}^{p-1}}\dt[s]}}{\lambda}
=
\frac{c_f}{\lambda}\abs{\int\limits_{u(x)}^{u(x)+\lambda v(x)}\abs{\abs{s}^{p-1}}\dt[s]} \\
\leq &
\frac{c_f}{\lambda}\abs{\int\limits_{u(x)}^{u(x)+\lambda v(x)}\sup\limits_{t \in [u(x),u(x)+\lambda v(x)]}\abs{\abs{t}^{p-1}}\dt[s]}
\leq
\frac{c_f}{\lambda}\abs{\int\limits_{u(x)}^{u(x)+\lambda v(x)}\sup\limits_{t \in [u(x),u(x) + v(x)]} \abs{ \abs{t}^{p-1} }\dt[s]}\\
\leq&
\frac{c_f}{\lambda} \max\{\abs{u(x)}^{p-1}, \abs{u(x)+v(x)}^{p-1}\} \int\limits_{u(x)}^{u(x)+\lambda v(x)} 1 \dt[s]\\
=&
\frac{c_f}{\lambda} \max\{\abs{u(x)}^{p-1}, \abs{u(x)+v(x)}^{p-1}\} \lambda v(x) 
=
 c_f \max\{\abs{u(x)}^{p-1}, \abs{u(x)+v(x)}^{p-1}\} v(x).
\end{align*}
It follows from H\"older inequality that this function is integrable.
Calculation for the functional $J_2$ are very similar:
\begin{align*}
\dual{J_2'(u)}{v}=&\lim\limits_{\lambda \downarrow 0} \frac{\int\limits_{\Omega} G(u(x)+\lambda v(x))-G(u(x))\dt[x]}{\lambda}= \int\limits_{\Omega}\lim\limits_{\lambda \downarrow 0} \frac{ G(u(x)+\lambda v(x))-G(u(x))\dt[x]}{\lambda}\\
=&\int\limits_{\Omega} \lim\limits_{v(x)\lambda\to 0} \frac{G(u(x)+\lambda v(x))-G(u(x))}{\lambda v(x)}\dt[x]=\int\limits_{\Omega} g(u(x)) v(x) \dt[x],
\end{align*}
since the following estimate holds: $\frac{G(u(x)+\lambda v(x))-G(u(x))}{\lambda}$ from above:
\begin{align*}
\abs{g_\lambda (x)}
=&
\abs{\frac{G(u(x)+\lambda v(x))-G(u(x))}{\lambda}}
=
\abs{\frac{\int\limits_{0}^{u(x)+\lambda v(x)}g(s)\dt[s]-\int\limits_{0}^{u(x)} g(s) \dt[s]}{\lambda}}\\
\leq &
\frac{\abs{\int\limits_{u(x)}^{u(x)+\lambda v(x)}\abs{g(s)}\dt[s]}}{\lambda} 
\leq 
\frac{\abs{\int\limits_{u(x)}^{u(x)+\lambda v(x)}\abs{c_g\bracket{ 1 +\abs{s}^{q-1}}} \dt[s]}}{\lambda}
=
\frac{c_g}{\lambda}\abs{\int\limits_{u(x)}^{u(x)+\lambda v(x)}1+\abs{s}^{q-1}\dt[s]} \\
\leq &
\frac{c_g}{\lambda}\abs{\int\limits_{u(x)}^{u(x)+\lambda v(x)} 1+ \sup\limits_{t \in [u(x),u(x)+\lambda v(x)]}\abs{t}^{q-1}\dt[s]}\\
\leq&
\frac{c_g}{\lambda}\abs{\lambda v(x) + \int\limits_{u(x)}^{u(x)+\lambda v(x)}\sup\limits_{t \in [u(x),u(x) + v(x)]} \abs{t}^{q-1} \dt[s]}\\
\leq&
c_g \bracket{\abs{v(x)}+\max\{\abs{u(x)}^{q-1}, \abs{u(x)+v(x)}^{q-1}\}v(x)}.
\end{align*}
We can show, using Theorem \ref{preliminaries.theorem.rellich-kondrachov} and H\"older inequality, that the obtained dominant is integrable. 
Which concludes the proof.
\end{proof}
\begin{remark}
From the~proof of Lemma \ref{well_defined_functionals}, we can conclude that $\Phi$ is bounded on bounded subsets of $\Wppz{1}{p}{\Omega}$.
\end{remark}

\noindent We can note that using the~functional given above we can rewrite Problem \ref{problem.weak.solution} also as:\\
Find $u\in \Wppz{1}{p}{\Omega}$ such that
$$
\dual{E'(u)}{v}= \dual{\Phi'(u)}{v} - \lambda\dual{ J_1'(u)}{v}-\gamma \dual{ J_2'(u)}{v}=0,
$$
for all $v \in \Wppz{1}{p}{\Omega}$.

\noindent Thus, any critical point to $E$ is a~solution of Problem \ref{problem.weak.solution}.
At first we shall concentrate on properties of functional $\Phi$. We shall prove that it has many similarities to a~norm.\\
We define the following functional for every $u \in \Wppz{1}{p}{\Omega}$
\begin{align*}
u\mapsto \norm[{*}]{u}=\bracket{p\cdot \Phi(u)}^{\frac{1}{p}}=\sqrt[p]{\int\limits_{\Omega}\abs{\nabla u(x)}^p+\mu \frac{\abs{u(x)}^p}{\abs{x}^p}\dt[x]}.
\end{align*}

\begin{lemma}
Function $\Wppz{1}{p}{\Omega} \ni u \mapsto \norm[{*}]{u}$ is a~norm on the~space $\Wppz{1}{p}{\Omega}$.
\end{lemma}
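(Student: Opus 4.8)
The plan is to recognise $\norm[*]{\cdot}$ as the combination, through the map $N(a,b)=\sqrt[p]{a^p+b^p}$, of two honest norms on $\Wppz{1}{p}{\Omega}$: the Sobolev norm $\norm[{\Wppz{1}{p}{\Omega}}]{u}=\norm[{\Lp[p]{\Omega}}]{\nabla u}$ and a scaled weighted norm $\mu^{1/p}\norm[{\Lp[p]{\abs{x}^{-p};\Omega}}]{u}$. Concretely,
$$
\norm[*]{u}^p=\norm[{\Wppz{1}{p}{\Omega}}]{u}^p+\mu\,\norm[{\Lp[p]{\abs{x}^{-p};\Omega}}]{u}^p .
$$
First I would settle well-definedness: the first summand is finite by the definition of the working norm, while the second is finite for every $u\in\Wppz{1}{p}{\Omega}$ by the Hardy inequality, which guarantees $u\in\Lp[p]{\abs{x}^{-p};\Omega}$. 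Hence $\norm[*]{u}<+\infty$ throughout.

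Homogeneity and positive-definiteness are immediate. Pulling $\abs{c}^p$ out of both integrals gives $\norm[*]{cu}=\abs{c}\norm[*]{u}$. For definiteness, both summands above are non-negative, so $\norm[*]{u}=0$ forces $\norm[{\Wppz{1}{p}{\Omega}}]{u}=0$, i.e. $\nabla u=0$ almost everywhere, and therefore $u=0$ in $\Wppz{1}{p}{\Omega}$ since $\norm[{\Wppz{1}{p}{\Omega}}]{\cdot}$ is a genuine norm (Poincar\`e inequality).

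The triangle inequality is the one genuine step, and I would obtain it from the already-proven Lemma \ref{metric lemma} rather than re-doing a Minkowski computation. Set $d_1(u,v)=\norm[{\Wppz{1}{p}{\Omega}}]{u-v}$ and $d_2(u,v)=\mu^{1/p}\norm[{\Lp[p]{\abs{x}^{-p};\Omega}}]{u-v}$. Each is a metric on $\Wppz{1}{p}{\Omega}$: $d_1$ is induced by the Sobolev norm, and $d_2$ is induced by a positive multiple of the weighted $\Lp[p]{}$ norm, whose triangle inequality is Minkowski's inequality in $\Lp[p]{\abs{x}^{-p};\Omega}$ and whose definiteness holds because $\abs{x}^{-p}>0$ almost everywhere, so that a vanishing weighted norm forces $u=v$ Lebesgue-a.e. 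By Lemma \ref{metric lemma}, $d(u,v)=\sqrt[p]{d_1(u,v)^p+d_2(u,v)^p}=\norm[*]{u-v}$ is then a metric. Translating back to the norm, for $u,v\in\Wppz{1}{p}{\Omega}$ the triangle inequality of $d$ yields
$$
\norm[*]{u+v}=d(u+v,0)\leq d(u+v,v)+d(v,0)=\norm[*]{u}+\norm[*]{v}.
$$

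The main obstacle — modest but worth isolating — is verifying that $d_2$ is a bona fide metric on the \emph{whole} space $\Wppz{1}{p}{\Omega}$; this hinges precisely on the Hardy inequality (finiteness of the weighted norm) and on the positivity of $\abs{x}^{-p}$ off the single point $0$ (so that nullity of $d_2$ forces equality a.e.). Once that is in place, everything reduces to Lemma \ref{metric lemma} together with standard $\Lp[p]{}$ facts, and the three norm axioms follow.
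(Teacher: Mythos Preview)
Your proof is correct and follows essentially the same route as the paper: positive-definiteness and homogeneity are checked directly, and the triangle inequality is obtained by invoking Lemma~\ref{metric lemma} with $d_1(u,v)=\norm[{\Wppz{1}{p}{\Omega}}]{u-v}$ and $d_2(u,v)=\mu^{1/p}\norm[{\Lp[p]{\abs{x}^{-p};\Omega}}]{u-v}$, then reading off $\norm[*]{u+v}=d(u+v,0)\le d(u+v,v)+d(v,0)=\norm[*]{u}+\norm[*]{v}$. Your additional remarks on well-definedness via the Hardy inequality and on why $d_2$ is a genuine metric are welcome clarifications but do not change the structure of the argument.
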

\begin{proof}
We observe that $\norm[*]{u}=(p\cdot \Phi (u))^{\frac{1}{p}}$. We will show $\norm[{*}]{u}$ satisfies norm axioms.
\begin{itemize}
\item[N1)] Let $\norm[{*}]{u}=0$. Then $(p\cdot \Phi (u))^{\frac{1}{p}}=0$, thus $p\cdot\Phi(u)$=0. We note explicitly
$$
\int\limits_{\Omega} \abs{\nabla u(x)}^p + \mu \frac{\abs{u(x)}^p}{\abs{x}^p}\dt[x]=0.
$$
Since $p$-power of modulus is non-negative, then $\nabla u(x)\equiv 0$ and $u(x)\equiv 0$ almost everywhere in $\Omega$, hence $u=0$ in $\Wppz{1}{p}{\Omega}$. The~opposite implication holds instantly.
\item[N2)]Let $\alpha \in \setR.$ Then: 
\begin{align*}
\norm[{*}]{\alpha u}&=(p\cdot \Phi (\alpha u))^{\frac{1}{p}}\\
&=\bracket{\int\limits_{\Omega} \abs{\nabla(\alpha u(x))}^p + \mu \frac{\abs{\alpha u(x)}^p}{\abs{x}^p }\dt[x]}^{\frac{1}{p}}\\
&=\abs{\alpha}\bracket{\int\limits_{\Omega}\abs{\nabla(u(x))}^p + \mu \frac{\abs{u(x)}^p}{\abs{x}^p}\dt[x]}^{\frac{1}{p}}\\
&=\abs{\alpha} (p\cdot \Phi (u))^{\frac{1}{p}}= \abs{\alpha}\norm[{*}]{u}.
\end{align*}
\item[N3)] Define two metrics $d_1, d_2\colon  \Wppz{1}{p}{\Omega}\times \Wppz{1}{p}{\Omega} \to \setR$ as 
\begin{align*}
d_1(x,y)&=\norm[{\Wppz{1}{p}{\Omega}}]{x-y},\\
d_2(x,y)&=\sqrt[p]{\mu} \norm[{\Lp[p]{\abs{x}^{-p};\Omega}}]{x-y}.
\end{align*}
Then it follows from by Lemma \ref{metric lemma} that $d(x,y)=\sqrt[p]{d_1(x,y)^p+d_2(x,y)^p}$ is also a~metric. Thus, for $x=u+w, z=0, y=w$ we have: 
\begin{align*}
\norm[{*}]{u+w}=&\bracket{p\cdot \Phi (u+w)}^{\frac{1}{p}} \\
=& \sqrt[p]{d_1(u+w,0)^p+d_2(u+w,0)^p}\\
=&d(u+w,0)\leq d(u+w,w)+d(w,0)\\
=&\sqrt[p]{\norm[{\Wppz{1}{p}{\Omega}}]{u+w-w}^p+\mu \norm[{\Lp[p]{\abs{x}^{-p};\Omega}}]{u+w-w}^p}\\
&+\sqrt[p]{\norm[{\Wppz{1}{p}{\Omega}}]{w}^p+\mu \norm[{\Lp[p]{|x|^{-p};\Omega}}]{w}^p}\\
=&(p\cdot \Phi (u))^{\frac{1}{p}}+(p\cdot \Phi (w))^{\frac{1}{p}}=\norm[{*}]{u} + \norm[{*}]{w}.
\end{align*}
\end{itemize}
Hence $u \mapsto \norm[{*}]{u}$ is a~norm.
\end{proof}

\begin{lemma}\label{swlsc}
Functional $\Phi$ is sequentially weakly lower semicontinuous.
\end{lemma}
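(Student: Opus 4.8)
The plan is to exploit the identity $\Phi(u)=\frac{1}{p}\norm[*]{u}^p$ established in the previous lemma, which reduces the sequential weak lower semicontinuity of $\Phi$ to two soft properties: convexity of $\Phi$ and its continuity in the strong topology. First I would observe that $\Phi$ is finite everywhere and that, by the Hardy inequality together with the Poincar\'e inequality (Theorem \ref{Poincare inequality}), the functional $\norm[*]{\cdot}$ is a norm equivalent to the standard norm of $\Wppz{1}{p}{\Omega}$; in particular the weak topology is the same whether taken with respect to the Sobolev norm or to $\norm[*]{\cdot}$.

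Next I would verify convexity and continuity. Since $\norm[*]{\cdot}$ is a norm it is convex, and the scalar map $t\mapsto\frac{1}{p}t^p$ is convex and nondecreasing on $[0,+\infty)$ for $p\geq 2$; composing a nondecreasing convex function with a convex one yields a convex functional, so $u\mapsto\Phi(u)$ is convex on $\Wppz{1}{p}{\Omega}$. For continuity, $\Phi$ is bounded on bounded subsets of $\Wppz{1}{p}{\Omega}$ (the remark following Lemma \ref{well_defined_functionals}), and a convex functional that is locally bounded above is continuous for the strong topology.

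With these two facts in hand, I would invoke the classical principle that a convex, strongly lower semicontinuous functional on a Banach space is sequentially weakly lower semicontinuous. This follows from Mazur's lemma: the sublevel sets $\set{u : \Phi(u)\leq c}$ are convex and strongly closed, hence weakly closed. Equivalently, one argues directly: given $u_n\weakto u$, Hahn--Banach provides a continuous linear functional of dual norm one that attains $\norm[*]{u}$, whence $\norm[*]{u}\leq\liminf_n\norm[*]{u_n}$, and composing with the continuous nondecreasing map $t\mapsto\frac{1}{p}t^p$ gives $\Phi(u)\leq\liminf_n\Phi(u_n)$.

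The step demanding the most care, and the reason to route the argument through the norm structure rather than treating the two terms of $\Phi$ separately, is the singular term $\int_\Omega|u|^p/\abs{x}^p\dt[x]$: its weight $\abs{x}^{-p}$ is unbounded near $0$, so the strong $\Lp[q]{\Omega}$ convergence furnished by the Rellich--Kondrachov theorem (Theorem \ref{preliminaries.theorem.rellich-kondrachov}) does not directly control this integral near the singularity, and a naive term-by-term lower semicontinuity argument stalls there. Convexity sidesteps this obstruction entirely, so I expect the genuine content of the proof to lie in recognizing that $\Phi$ is the $p$-th power of a norm and thus convex.
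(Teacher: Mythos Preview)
Your argument is correct, and its core idea---$\Phi$ is convex and strongly continuous, hence sequentially weakly lower semicontinuous via Mazur's lemma---is exactly what underlies the paper's proof as well. The only difference is organizational: the paper does not pass through the combined norm $\norm[*]{\cdot}$ but instead writes
\[
\Phi(u)=\frac{1}{p}\Bigl(\norm[{\Wppz{1}{p}{\Omega}}]{u}^p+\mu\,\norm[{\Lp[p]{\abs{x}^{-p},\Omega}}]{u}^p\Bigr)
\]
and invokes sequential weak lower semicontinuity of each norm separately.

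Your final paragraph therefore misjudges the alternative. The term-by-term route does not stall at the singular integral: $\int_\Omega\abs{u}^p/\abs{x}^p\,\dt[x]$ is precisely $\norm[{\Lp[p]{\abs{x}^{-p},\Omega}}]{u}^p$, the $p$-th power of a norm on a Banach space into which $\Wppz{1}{p}{\Omega}$ embeds continuously by Hardy. So by the very convexity principle you invoke, this term is already sequentially weakly lower semicontinuous on its own, and Rellich--Kondrachov compactness plays no role. Your detour through $\norm[*]{\cdot}$ is valid but buys nothing beyond the paper's two-line splitting; the ``obstruction'' you describe is illusory.
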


\begin{proof}
From its definition it follows that
$$
\Phi(u)= \frac{1}{p} \bracket{\norm[{\Wppz{1}{p}{\Omega}}]{u}^p+\mu\cdot \norm[{\Lp[p]{|x|^{-p},\Omega}}]{u}^p}.
$$
Since $\norm[{\Wppz{1}{p}{\Omega}}]{\cdot}$ and $\norm[{\Lp[p]{|x|^{-p},\Omega}}]{\cdot}$ are sequentially weakly lower semicontinuous, then $\Phi$ is sequentially weakly lower semicontinuous.
\end{proof}

\begin{lemma}\label{coercive}
Operator $\Phi$  is coercive.
\end{lemma}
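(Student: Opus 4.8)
The plan is to reduce coercivity to a one-line estimate by invoking the identity already recorded in the proof of Lemma~\ref{swlsc}, namely
$$
\Phi(u)= \frac{1}{p}\bracket{\norm[{\Wppz{1}{p}{\Omega}}]{u}^p + \mu\cdot \norm[{\Lp[p]{|x|^{-p};\Omega}}]{u}^p}.
$$
Coercivity of the functional $\Phi$ means $\Phi(u)\to+\infty$ whenever $\norm[{\Wppz{1}{p}{\Omega}}]{u}\to+\infty$, so it is enough to bound $\Phi$ from below by a quantity that already diverges.

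First I would observe that $\mu\in(0,+\infty)$ and that the weighted term $\norm[{\Lp[p]{|x|^{-p};\Omega}}]{u}$ is non-negative, so the second summand can only increase the value of $\Phi$. Discarding it yields the lower bound
$$
\Phi(u) \geq \frac{1}{p}\norm[{\Wppz{1}{p}{\Omega}}]{u}^p.
$$
Recalling that, by virtue of the Poincar\`e inequality (Theorem~\ref{Poincare inequality}), the working norm on $\Wppz{1}{p}{\Omega}$ is $\norm[{\Lp[p]{\Omega}}]{\nabla u}$, I would then let $\norm[{\Wppz{1}{p}{\Omega}}]{u}\to+\infty$. Since $p\geq 2>0$, the right-hand side tends to $+\infty$, whence $\Phi(u)\to+\infty$, which is exactly the asserted coercivity.

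I do not expect any genuine obstacle here: the only singular ingredient, the term carrying the Hardy potential $\abs{x}^{-p}$, enters with a favourable sign and is simply dropped, so neither the Hardy inequality nor any embedding is needed. The whole content of the statement is the coercivity of the $p$-th power of the Sobolev norm, and the sole point requiring attention is the positivity of $\mu$, which is precisely what guarantees that the discarded summand is non-negative.
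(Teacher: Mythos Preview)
Your argument is correct and coincides with the paper's own proof: drop the non-negative Hardy term (using $\mu>0$), bound $\Phi(u)$ from below by $\tfrac{1}{p}\norm[{\Wppz{1}{p}{\Omega}}]{u}^p$, and let the norm tend to infinity. The paper presents this in a single displayed line, but the content is identical.
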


\begin{proof}
$$
\Phi(u)=\frac{1}{p}\int\limits_{\Omega} \abs{\nabla u(x)}^p +\mu \frac{\abs{u(x)}^p}{\abs{x}^p}\dt[x]\geq \frac{1}{p}\int\limits_{\Omega} \abs{\nabla u(x)}^p =\frac{1}{p}\norm[\Wppz{1}{p}{\Omega}]{u}^p \xrightarrow{\norm{u}\to\infty}\infty.
$$
Thus, $\Phi$ is coercive operator.
\end{proof}

\noindent Now we focus on properties of $J_1$ and $J_2$.

\begin{lemma}\label{compact_dericative_J1}
Assume \ref{condition.f1}-\ref{condition.f2}. Then functional $J_1$ has a~compact derivative.
\end{lemma}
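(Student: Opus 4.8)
The plan is to show that $J_1'$ is a \emph{completely continuous} operator, i.e.\ that it sends every weakly convergent sequence in $\Wppz{1}{p}{\Omega}$ to a norm-convergent sequence in the dual space. Because $\Wppz{1}{p}{\Omega}$ is reflexive, a bounded sequence has a weakly convergent subsequence, so complete continuity forces $J_1'$ to carry bounded sets into relatively compact sets, which is exactly the compactness of the derivative. Recall from Lemma~\ref{well_defined_functionals} that $\dual{J_1'(u)}{v}=\int_\Omega f(u(x))v(x)\dt[x]$, so everything reduces to controlling the superposition (Nemytskii) operator $N_f\colon u\mapsto f(u(\cdot))$.

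First I would extract a growth bound from \ref{condition.f1} and \ref{condition.f2}. The quotient $t\mapsto f(t)/\abs{t}^{p-1}$ is continuous on $\setR\setminus\{0\}$, tends to $0$ as $\abs{t}\to 0$ by \ref{condition.f1} and as $\abs{t}\to+\infty$ by \ref{condition.f2}; hence it is bounded, and there is $M>0$ with $\abs{f(t)}\leq M\abs{t}^{p-1}$ for all $t\in\setR$ (continuity also forces $f(0)=0$). Writing $p'=\frac{p}{p-1}$ and using $(p-1)p'=p$, this yields $\abs{f(u(x))}^{p'}\leq M^{p'}\abs{u(x)}^{p}$, so $N_f$ maps $\Lp[p]{\Omega}$ into $\Lp[p']{\Omega}$ with $\norm[{\Lp[p']{\Omega}}]{N_f(u)}^{p'}\leq M^{p'}\norm[{\Lp[p]{\Omega}}]{u}^{p}$.

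Next I would prove that $N_f\colon\Lp[p]{\Omega}\to\Lp[p']{\Omega}$ is continuous. Given $u_n\to u$ in $\Lp[p]{\Omega}$, I pass to an arbitrary subsequence, extract a further subsequence converging almost everywhere and dominated by a fixed function $w\in\Lp[p]{\Omega}$; continuity of $f$ gives $f(u_{n_k})\to f(u)$ a.e., while the growth bound provides the $\Lp[1]{\Omega}$-dominant $M^{p'}\abs{w}^{p}$ for $\abs{f(u_{n_k})}^{p'}$, so dominated convergence yields $N_f(u_{n_k})\to N_f(u)$ in $\Lp[p']{\Omega}$. Since every subsequence admits a further subsequence with the same limit, the whole sequence converges, proving continuity (this is the Krasnoselskii superposition theorem).

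Finally I would assemble the pieces. If $u_n\weakto u$ in $\Wppz{1}{p}{\Omega}$, Theorem~\ref{preliminaries.theorem.rellich-kondrachov} (the compact embedding $\Wppz{1}{p}{\Omega}\compactembedding\Lp[p]{\Omega}$) gives $u_n\to u$ strongly in $\Lp[p]{\Omega}$, whence $N_f(u_n)\to N_f(u)$ in $\Lp[p']{\Omega}$ by the previous step. Using Hölder's inequality and the continuous embedding $\Wppz{1}{p}{\Omega}\continuousembedding\Lp[p]{\Omega}$,
$$
\norm[]{J_1'(u_n)-J_1'(u)}=\sup_{\norm[]{v}\leq 1}\abs{\int_\Omega\bracket{f(u_n(x))-f(u(x))}v(x)\dt[x]}\leq C\,\norm[{\Lp[p']{\Omega}}]{N_f(u_n)-N_f(u)}\xrightarrow{n\to\infty}0,
$$
which is the desired complete continuity. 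I expect the main obstacle to be the continuity of the superposition operator $N_f$ on the Lebesgue spaces: the subsequence-and-domination argument must be run carefully, and it is here that \ref{condition.f1} and \ref{condition.f2} are genuinely used (through the bound $\abs{f(t)}\le M\abs{t}^{p-1}$), whereas the passage from $\Lp[p']{\Omega}$-convergence to dual-norm convergence and the use of Rellich--Kondrachov are routine.
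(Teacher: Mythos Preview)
Your argument is correct, and it is in fact the cleaner of the two. You factor $J_1'$ as the composition of the compact embedding $\Wppz{1}{p}{\Omega}\compactembedding\Lp[p]{\Omega}$, the continuous Nemytskii operator $N_f\colon\Lp[p]{\Omega}\to\Lp[p']{\Omega}$ (Krasnoselskii, proved by the standard subsequence--domination trick), and the continuous dual pairing $\Lp[p']{\Omega}\to\Wpp{-1}{p'}{\Omega}$; complete continuity of $J_1'$ then drops out immediately, and reflexivity upgrades this to compactness. The paper instead works directly in the dual: it first shows that $\sequence{J_1'(u_n)}{n}{}$ is bounded in $\Wpp{-1}{p'}{\Omega}$, extracts a weak limit $d$ there, and then argues by contradiction that the convergence is in fact strong by choosing near-optimal test functions $v_n$ and applying the compact embedding to the $v_n$ rather than to the $u_n$. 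Your route is more modular and identifies the limit as $J_1'(u)$ without extra work; the paper's route avoids invoking the Nemytskii/Krasnoselskii theorem explicitly but is longer and somewhat ad hoc. Both rely on the same two ingredients---the growth bound $\abs{f(t)}\le M\abs{t}^{p-1}$ coming from \ref{condition.f1}--\ref{condition.f2} and the Rellich--Kondrachov embedding---so the difference is organisational rather than conceptual.
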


\begin{proof}
We split the~proof into two parts. At first we shall prove that the~image of a~bounded set through 
$J_1'$ is bounded, afterwards we shall prove the~existence of a~convergent subsequence within this image. We take a~bounded sequence $(u_n)_{n \in \Nset}\subset \Wppz{1}{p}{\Omega}$, which means there exists $M>0$ such that for all $n \in \Nset$
$$
\norm[{\Wppz{1}{p}{\Omega}}]{u_n} \leq M. 
$$
We show that the~sequence $J'_1(u_n)$ is also bounded, what is equivalent to 
$$
\sup\limits_{\norm[{\Wppz{1}{p}{\Omega}}]{v}=1} \abs{\dual{J'_1(u_n)}{v}} < + \infty .
$$
By the~conditions \ref{condition.f1} and \ref{condition.f2} and continuity of $f$ we obtain
$$
\abs{f(t)}\leq c_f \abs{t}^{p-1},\: t\in \setR .
$$
Thus
\begin{align*}
\abs{\int\limits_{\Omega} f(u_n(x))v(x) \dt[x]} &\leq \int\limits_{\Omega}\abs{f(u_n(x))}\abs{v(x)} \dt[x] \\
										&\leq \int\limits_{\Omega} c_f \abs{u_n(x)}^{p-1} \abs{v(x)} \dt[x].
\end{align*}
From H\"older inequality it follows that:
\begin{align*}
\int\limits_{\Omega} \abs{u_n(x)}^{p-1} \abs{v(x)} \dt[x] & \leq \bracket{\int\limits_{\Omega} (\abs{u_n(x)}^{p-1})^{\frac{p}{p-1}}\dt[x]}^{\frac{p-1}{p}} \bracket{\int\limits_{\Omega} \abs{v(x)}^p\dt[x]}^{\frac{1}{p}} \\
&= \norm[{\Lp[p]{\Omega}}]{v} \norm[{\Lp[p]{\Omega}}]{u_n}^{p-1}\\
& \leq c_p^p \norm[{\Wppz{1}{p}{\Omega}}]{v} \norm[{\Wppz{1}{p}{\Omega}}]{u_n}^{p-1} \\
& \leq c_p^p M^{p-1},
\end{align*}
where $c_p$ is the~constant from Lemma \ref{Poincare inequality}. It follows that 
$$
\sup\limits_{\norm[{\Wppz{1}{p}{\Omega}}]{v}=1} \abs{\int\limits_{\Omega} f(u_n(x))v(x)\dt[x]} \leq c_f\, c_p^p\, M^{p-1} ,
$$
and this means that for all $n \in \Nset$
$$
\norm[{\Wpp{-1}{p'}{\Omega}}]{J_1'(u_n)}  \leq c_f\  c_p^p\  M^{p-1} < +\infty.
$$
Thus, the~image of $J_1$ is bounded.
We know that in a~reflexive Banach space each bounded sequence has a~weakly convergent subsequence. So there exists $d \in \Wpp{-1}{p'}{\Omega}$ such that $J_1'(u_n) \weakto d$. 
Suppose, that $\norm[{\Wpp{-1}{p'}{\Omega}}]{J_1'(u_n)-d}>\delta>0$. Then
$$
\exists\: k\: \forall\: (n \geq k)\: \sup\limits_{\norm[{\Wpp{-1}{p'}{\Omega}}]{v}=1} \dual{J_1'(u_n)-d}{v}>\delta>0.
$$
Then we can construct the~sequence $\sequence{v_n}{n=1}{\infty} \subset \Wppz{1}{p}{\Omega}$ such that $\norm[{\Wppz{1}{p}{\Omega}}]{v_n}=1$ and
$$
\exists\: k\: \forall\: (n \geq k)\: \dual{J_1'(u_n)-d}{v_n}>\delta.
$$
Since $\sequence{v_n}{n=1}{\infty}$ is bounded, it admits a~weakly convergent subsequence. By the~Rellich-Kondrachov theorem (Theorem \ref{preliminaries.theorem.rellich-kondrachov}) this sequence admits a~subsequence convergent strongly in $\Lp[p]{\Omega}$. Without any loss at generality we can assume that $v_n$ is weakly convergent in $\Wppz{1}{p}{\Omega}$ and strongly in $\Lp[p]{\Omega}$. The~following holds in an obvious way
\begin{align*}
\dual{J_1'(u_n)-d}{v_n}=\dual{J_1'(u_n)-d}{v}+\dual{J_1'(u_n)}{v_n-v}-\dual{d}{v_n-v}.
\end{align*}
The first and the~third terms converge to zero. Finally
\begin{align*}
0 &\leq \abs{\dual{J_1'(u_n)}{ v_n-v}} = \int\limits_{\Omega} \abs{f(u_n)} \cdot \abs{v_n-v} \dt[x] \leq \int\limits_{\Omega} c_f \abs{u_n}^{p-1}\abs{v_n-v} \dt[x]\\
  & \leq c_f \bracket{\int\limits_{\Omega} \abs{u_n}^p \dt[x]} ^{\frac{p-1}{p}}\bracket{\int\limits_{\Omega} \abs{v_n-v}^p \dt[x]}^{\frac{1}{p}}=c_f \norm[{\Lp[p]{\Omega}}]{u_n}^{p-1} \cdot  \norm[{\Lp[p]{\Omega}}]{v_n-v}\to 0. 
\end{align*}
Thus, we have a~contradiction.
\end{proof}

\noindent The~proof for this fact follows the~steps of proof of Lemma \ref{compact_dericative_J1} almost identically. We prove the~similar statement for $J_2$, as we apply the following compact embedding: $\Wppz{1}{p}{\Omega} \compactembedding \Lp[q]{\Omega}$ instead of $\Wppz{1}{p}{\Omega} \compactembedding \Lp{\Omega}$. 
\begin{lemma}\label{compact_derivative_J2}
Assume \ref{condition.g1}. Then functional $J_2$ has a~compact derivative.
\end{lemma}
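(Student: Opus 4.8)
The plan is to follow the two-part scheme of the proof of Lemma~\ref{compact_dericative_J1} almost verbatim, replacing the embedding $\Wppz{1}{p}{\Omega} \compactembedding \Lp[p]{\Omega}$ by $\Wppz{1}{p}{\Omega} \compactembedding \Lp[q]{\Omega}$, which is legitimate precisely because assumption \ref{condition.g1} guarantees $1 < q < p^\ast = \frac{pn}{n-p}$, so Theorem~\ref{preliminaries.theorem.rellich-kondrachov} applies with this exponent. First I would show that $J_2'$ sends bounded sets to bounded sets; then, given a bounded sequence $\sequence{u_n}{n=1}{\infty}$, I would extract a weak limit $d$ of $J_2'(u_n)$ in the dual space and argue by contradiction that this convergence is in fact strong.

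For the boundedness part, take $\norm[{\Wppz{1}{p}{\Omega}}]{u_n} \leq M$ and estimate, for $\norm[{\Wppz{1}{p}{\Omega}}]{v}=1$,
$$
\abs{\dual{J_2'(u_n)}{v}} \leq \int\limits_{\Omega} c_g\bracket{1+\abs{u_n(x)}^{q-1}}\abs{v(x)} \dt[x].
$$
The new feature compared with the $J_1$ case is the additive constant $1$ in the growth bound of \ref{condition.g1}, which forces me to split the integral into $c_g\int_\Omega \abs{v}$ and $c_g\int_\Omega \abs{u_n}^{q-1}\abs{v}$. The first piece is controlled by $c_g\,\abs{\Omega}^{1-1/q}\norm[{\Lp[q]{\Omega}}]{v}$, while for the second I would apply H\"older's inequality with the conjugate pair $\tfrac{q}{q-1}$ and $q$, obtaining the bound $c_g\norm[{\Lp[q]{\Omega}}]{u_n}^{q-1}\norm[{\Lp[q]{\Omega}}]{v}$. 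The continuous part of the embedding into $\Lp[q]{\Omega}$ then turns both factors into multiples of the $\Wppz{1}{p}{\Omega}$-norms, giving a uniform bound in terms of $M$ and the embedding constant, so $\sequence{J_2'(u_n)}{n=1}{\infty}$ is bounded.

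For the compactness part, by reflexivity I pass to a subsequence with $J_2'(u_n) \weakto d$ and suppose, for contradiction, that $\norm[{\Wpp{-1}{p'}{\Omega}}]{J_2'(u_n)-d} > \delta > 0$ along it; choosing near-maximizers of the dual norm yields $v_n$ with $\norm[{\Wppz{1}{p}{\Omega}}]{v_n}=1$ and $\dual{J_2'(u_n)-d}{v_n} > \delta$. By Theorem~\ref{preliminaries.theorem.rellich-kondrachov} I may assume $v_n \weakto v$ in $\Wppz{1}{p}{\Omega}$ and $v_n \to v$ strongly in $\Lp[q]{\Omega}$. Decomposing $\dual{J_2'(u_n)-d}{v_n}$ exactly as in the $J_1$ proof, the terms $\dual{J_2'(u_n)-d}{v}$ and $\dual{d}{v_n-v}$ vanish, while the middle term $\dual{J_2'(u_n)}{v_n-v}$ is dominated by $c_g\bracket{\abs{\Omega}^{1-1/q} + \norm[{\Lp[q]{\Omega}}]{u_n}^{q-1}}\norm[{\Lp[q]{\Omega}}]{v_n-v}$, which tends to $0$ because $\norm[{\Lp[q]{\Omega}}]{u_n}$ stays bounded and $\norm[{\Lp[q]{\Omega}}]{v_n-v} \to 0$. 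This contradicts $\dual{J_2'(u_n)-d}{v_n} > \delta$ and forces strong convergence. The only genuinely delicate point is the exponent bookkeeping: one must fix the conjugate pair $\tfrac{q}{q-1}, q$ so that a \emph{single} strong convergence in $\Lp[q]{\Omega}$ simultaneously annihilates both the constant and the power-$(q-1)$ contributions, and invoke $q < p^\ast$ to secure the compact embedding. Once that is arranged, everything else is routine.
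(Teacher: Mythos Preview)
Your proposal is correct and takes essentially the same approach as the paper, which does not write out the details but simply remarks that the proof follows that of Lemma~\ref{compact_dericative_J1} almost identically, replacing the compact embedding $\Wppz{1}{p}{\Omega}\compactembedding \Lp[p]{\Omega}$ by $\Wppz{1}{p}{\Omega}\compactembedding \Lp[q]{\Omega}$. Your explicit handling of the additive constant in \ref{condition.g1} via the H\"older pair $\bigl(\tfrac{q}{q-1},q\bigr)$ is exactly the minor bookkeeping the paper alludes to.
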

\noindent .

\noindent We will use Theorem \ref{Browder-Minty} and Lemma \ref{Chabrowski} to prove the~following lemma:

\begin{lemma}\label{continuous_inverse}
The derivative of operator $\Phi$ admits a~continuous inverse. Namely for 
$$
\dual{\Phi'(u)}{v}=\int\limits_{\Omega} |\nabla u(x)|^{p-2} \nabla u(x) \cdot \nabla v(x) + \mu \frac{|u(x)|^{p-2}u(x)v(x)}{|x|^p}\dt[x] 
$$ 
there exists the~continuous inverse $\bracket{\Phi'}^{-1}\colon \Wpp{-1}{p'}{\Omega}\to \Wppz{1}{p}{\Omega}$.
\end{lemma}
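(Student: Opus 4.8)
The plan is to invoke the Browder--Minty theorem (Theorem \ref{Browder-Minty}). The space $\Wppz{1}{p}{\Omega}$ is real, separable and reflexive, and by Lemma \ref{well_defined_functionals} the operator $\Phi'$ maps $\Wppz{1}{p}{\Omega}$ into its dual $\Wpp{-1}{p'}{\Omega}$. By the remark following Theorem \ref{Browder-Minty}, it is enough to show that $\Phi'$ is \emph{uniformly monotone}: strict monotonicity, coercivity and hemicontinuity then come for free, and Browder--Minty yields the continuous inverse $\bracket{\Phi'}^{-1}\colon \Wpp{-1}{p'}{\Omega}\to \Wppz{1}{p}{\Omega}$.

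To establish uniform monotonicity I would compute $\dual{\Phi'(u)-\Phi'(v)}{u-v}$ and split it into the gradient part and the singular weighted part. For the gradient part I apply Lemma \ref{Chabrowski} pointwise to the vectors $\nabla u(x),\nabla v(x)\in\setR^n$ and integrate over $\Omega$, obtaining the lower bound $\frac{2}{p(2^{p-1}-1)}\norm[{\Wppz{1}{p}{\Omega}}]{u-v}^p$, since $\norm[{\Wppz{1}{p}{\Omega}}]{\cdot}$ is the gradient norm. For the singular part I apply the same lemma in the scalar case (i.e. $n=1$) to $u(x),v(x)\in\setR$, divide by the positive weight $\abs{x}^p$, and integrate against it, which gives $\mu\frac{2}{p(2^{p-1}-1)}\norm[{\Lp[p]{\abs{x}^{-p};\Omega}}]{u-v}^p$.

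Adding the two bounds and recalling $\norm[{*}]{w}^p=\norm[{\Wppz{1}{p}{\Omega}}]{w}^p+\mu\norm[{\Lp[p]{\abs{x}^{-p};\Omega}}]{w}^p$ yields
$$
\dual{\Phi'(u)-\Phi'(v)}{u-v}\geq \frac{2}{p(2^{p-1}-1)}\norm[{*}]{u-v}^p\geq \frac{2}{p(2^{p-1}-1)}\norm[{\Wppz{1}{p}{\Omega}}]{u-v}^p,
$$
where the final inequality simply drops the nonnegative weighted term. Setting $a(t)=\frac{2}{p(2^{p-1}-1)}t^{p-1}$, which is continuous, strictly increasing, satisfies $a(0)=0$ and tends to $+\infty$ as $t\to+\infty$ because $p\geq 2$ forces $p-1\geq 1$, one obtains exactly $a\bracket{\norm[{\Wppz{1}{p}{\Omega}}]{u-v}}\norm[{\Wppz{1}{p}{\Omega}}]{u-v}\leq \dual{\Phi'(u)-\Phi'(v)}{u-v}$, which is the defining inequality for uniform monotonicity.

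The only genuine obstacle is the correct double application of Lemma \ref{Chabrowski}: once to the $n$-dimensional gradient vectors, and once to the scalar values with the singular weight $\abs{x}^{-p}$ retained inside the integral. Once this monotonicity estimate is in hand, the conclusion is immediate from Browder--Minty, and the remaining hypotheses (coercivity, hemicontinuity) require no separate verification.
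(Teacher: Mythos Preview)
Your proposal is correct and follows essentially the same route as the paper: compute $\dual{\Phi'(u)-\Phi'(v)}{u-v}$, split into the gradient and singular-weight integrals, apply Lemma~\ref{Chabrowski} to each, drop the nonnegative weighted term, and read off uniform monotonicity with $a(t)=c\,t^{p-1}$ so that Theorem~\ref{Browder-Minty} delivers the continuous inverse. The paper leaves the constant and the function $a$ implicit (writing ``$a_1$'') and mentions hemicontinuity as easy to verify, whereas you invoke the remark after Theorem~\ref{Browder-Minty} to dispense with it; otherwise the arguments coincide.
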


\begin{proof}
We prove that $\Phi'$ is uniformly monotone.
\begin{align*}
&\dual{\Phi'(u_1)-\Phi'(u_2)}{u_1-u_2}\\
=&\int\limits_{\Omega} |\nabla u_1(x)|^{p-2} \nabla u_1(x) \cdot \nabla(u_1(x)- u_2(x)) + \mu \frac{|u_1(x)|^{p-2}u_1(x)(u_1(x)-u_2(x))}{|x|^p}\\
&-|\nabla u_2(x)|^{p-2} \nabla u_2(x) \cdot \nabla (u_1(x)-u_2(x)) - \mu \frac{|u_2(x)|^{p-2}(u_1(x)-u_2(x))}{|x|^p}\dt[x]\\
=&\int\limits_{\Omega}( |\nabla u_1(x)|^{p-2} \nabla u_1(x)- |\nabla u_2(x)|^{p-2} \nabla u_2(x)) \cdot (\nabla u_1(x)- \nabla u_2(x))\\
&+ \mu \frac{(|u_1(x)|^{p-2}u_1(x) |u_2(x)|^{p-2}u_2(x))\cdot (u_1(x)-u_2(x))}{|x|^p}\dt[x].
\end{align*}
By Lemma \ref{Chabrowski} (applied twice)
\begin{align*}
&\dual{\Phi'(u_1)-\Phi'(u_2)}{u_1-u_2}\\
&\geq a_1\int\limits_{\Omega} \bracket{|\nabla u_1(x)-\nabla u_2(x)|^p+\frac{\mu}{|x|^p}|u_1(x)-u_2(x)|^p} \dt[x]\\
&=a_1\bracket{{\norm[{\Wppz{1}{p}{\Omega}}]{u_1-u_2}^p}+\mu {\norm[{\Lp[p]{|x|^{-p},\Omega}}]{u_1-u_2}^p}}\\
&\geq a_1 \norm[{\Wppz{1}{p}{\Omega}}]{u_1-u_2}^{p-1}\cdot \norm[{\Wppz{1}{p}{\Omega}}]{u_1-u_2}.
\end{align*}
One can easily check that  this operator is hemicontinuous. Since it is uniformly monotone operator it is a~monotone and coercive operator. Thus, by Theorem \ref{Browder-Minty} the~assertion holds.
\end{proof}

\section{Existence result}
\noindent In our problem the~role of $X$ space is played by $\Wppz{1}{p}{\Omega}$. It is separable and reflexive Banach space.

\begin{lemma}\label{uniformly_convex}
A space $\bracket{\Wppz{1}{p}{\Omega}, \norm[*]{\cdot}}$ is uniformly convex Banach space.
\end{lemma}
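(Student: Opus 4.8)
The plan is to realise $\bracket{\Wppz{1}{p}{\Omega}, \norm[{*}]{\cdot}}$ as an isometric copy of a subspace of a uniformly convex product in the sense of Clarkson, and then to invoke Theorem \ref{Clarkson}. The starting point is the identity already recorded in the proof of Lemma \ref{swlsc}:
$$
\norm[{*}]{u}^p = \norm[{\Wppz{1}{p}{\Omega}}]{u}^p + \mu\,\norm[{\Lp[p]{\abs{x}^{-p};\Omega}}]{u}^p,
$$
so that $\norm[{*}]{u} = N\bracket{\norm[{\Wppz{1}{p}{\Omega}}]{u},\ \sqrt[p]{\mu}\,\norm[{\Lp[p]{\abs{x}^{-p};\Omega}}]{u}}$ with $N(a,b)=\bracket{a^p+b^p}^{1/p}$. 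First I would argue that the two factor spaces are uniformly convex. The space $\Lp[p]{\abs{x}^{-p};\Omega}$ is an ordinary $\Lp[p]{}$ space over the measure $\abs{x}^{-p}\dt[x]$, hence uniformly convex by Clarkson's classical $\Lp[p]{}$ result for $1<p<\infty$, and rescaling its norm by the constant $\sqrt[p]{\mu}$ leaves uniform convexity intact. Likewise $\Wppz{1}{p}{\Omega}$, with its norm $\norm[{\Wppz{1}{p}{\Omega}}]{u}=\norm[{\Lp[p]{\Omega;\setR^n}}]{\nabla u}$, embeds isometrically via $u\mapsto\nabla u$ into the uniformly convex space $\Lp[p]{\Omega;\setR^n}$, and uniform convexity is inherited by subspaces.

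Next I would verify that $N(a,b)=\bracket{a^p+b^p}^{1/p}$ satisfies Clarkson's conditions: homogeneity is immediate, strict convexity is precisely the Minkowski inequality (with equality only for proportional arguments, as noted in the definition), and strict monotonicity in each variable is clear since $p>1$. By Theorem \ref{Clarkson} the product $P:=\Wppz{1}{p}{\Omega}\times\Lp[p]{\abs{x}^{-p};\Omega}$, normed by $N\bracket{\norm[{\Wppz{1}{p}{\Omega}}]{\cdot},\ \sqrt[p]{\mu}\,\norm[{\Lp[p]{\abs{x}^{-p};\Omega}}]{\cdot}}$, is then a uniformly convex Banach space. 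The Hardy inequality ensures every $u\in\Wppz{1}{p}{\Omega}$ also belongs to $\Lp[p]{\abs{x}^{-p};\Omega}$, so the diagonal map $\iota\colon u\mapsto(u,u)$ sends $\Wppz{1}{p}{\Omega}$ into $P$, and by the displayed identity it is an isometry from $\bracket{\Wppz{1}{p}{\Omega},\norm[{*}]{\cdot}}$ onto the subspace $\iota\bracket{\Wppz{1}{p}{\Omega}}\subset P$. Since uniform convexity transfers to this subspace and hence to its isometric preimage, $\norm[{*}]{\cdot}$ is uniformly convex. Completeness is handled separately: by the Hardy inequality $\norm[{*}]{\cdot}$ is equivalent to $\norm[{\Wppz{1}{p}{\Omega}}]{\cdot}$, under which the space is already complete, so $\bracket{\Wppz{1}{p}{\Omega},\norm[{*}]{\cdot}}$ is Banach.

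The main obstacle I anticipate lies not in the product machinery but in the precise justification that the two factors are uniformly convex in the exact form Clarkson's theorem requires, most delicately the reduction of the weighted space $\Lp[p]{\abs{x}^{-p};\Omega}$ to an unweighted $\Lp[p]{}$ over the measure $\abs{x}^{-p}\dt[x]$, together with the (standard but worth stating) facts that uniform convexity is preserved under isometry and passes to subspaces. Once these are in place, the verification of Clarkson's conditions 1--3 for $N$ and the identification of $\norm[{*}]{\cdot}$ with the diagonal norm are routine.
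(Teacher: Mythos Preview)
Your proposal is correct and follows essentially the same route as the paper: both form the Clarkson product $\Wppz{1}{p}{\Omega}\times\Lp[p]{\abs{x}^{-p};\Omega}$ with the $\ell^p$-combination norm, invoke Theorem \ref{Clarkson}, and then transfer uniform convexity to $\norm[{*}]{\cdot}$ via the diagonal $u\mapsto(u,u)$. The only differences are cosmetic: the paper verifies the definition of uniform convexity for $\norm[{*}]{\cdot}$ by hand at the end (pulling the $\delta_\epsilon$ through the diagonal identity), whereas you appeal to the abstract fact that uniform convexity passes to subspaces and isometric copies; and you add explicit remarks on why each factor is uniformly convex and on completeness, which the paper leaves implicit.
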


\begin{proof}
We recall that
\begin{align*}
u\mapsto \norm[{*}]{u}=\bracket{p\cdot \Phi(u)}^{\frac{1}{p}}=\sqrt[p]{\int\limits_{\Omega}\abs{\nabla u(x)}^p+\mu \frac{\abs{u(x)}^p}{\abs{x}^p}\dt[x]}
\end{align*}
in a norm on $\Wppz{1}{p}{\Omega}$. We will prove that it
is a~uniformly convex norm. Since we will use two different norms on $\Wppz{1}{p}{\Omega}$, it is essential to add that by $\norm[{\Wppz{1}{p}{\Omega}}]{u}=\sqrt[p]{\int\limits_{\Omega}\abs{\nabla u(x)}^p\dt[x]}$ we understand the~usual norm. In order to prove this, we will use Clarkson's concept of uniformly convex product. By Definition \ref{Clarkson} the~following functional $\Wppz{1}{p}{\Omega}\times \Lp[p]{\abs{x}^{-p},\Omega}\ni (u,v)\mapsto \norm[{**}]{(u,v)}\in \setR_+$, given by the~formula
$$
\norm[{**}]{(u,v)}=\sqrt[p]{\norm[{\Wppz{1}{p}{\Omega}}]{u}^p+\mu \norm[{\Lp[]{\abs{x}^{-p},\Omega}}]{v}^p },
$$
is uniformly convex product; here $\mu$ is the~same constant as in $\Phi$. By the~Clarkson theorem (Theorem \ref{Clarkson}), the~space $\bracket{\Wppz{1}{p}{\Omega}\times \Lp[p]{\abs{x}^{-p},\Omega},\norm[{**}]{\cdot}}$ is a~uniformly convex Banach space. We recall that $\Wppz{1}{p}{\Omega}\hookrightarrow \Lp[]{\abs{x}^{-p},\Omega}$. Thus, we can easily observe that $\norm[{**}]{(u,u)}=\norm[{*}]{u}$ for $u \in \Wppz{1}{p}{\Omega}$.

We will prove now that $u\mapsto \norm[{*}]{u}$ is an uniformly convex norm. Let $2>\epsilon>0, \norm[{*}]{u}=\norm[{*}]{v}=1$ and $\norm[{*}]{u-v}\geq \epsilon.$ Whence $\norm[{*}]{(u,u)}=1$ and $\norm[{**}]{(v,v)}=1$ as well as $\norm[{**}]{(u,u)-(v,v)}$ $\geq \epsilon$.
Thus, by the~uniform convexity of $\bracket{\Wppz{1}{p}{\Omega} \times \Lp[]{\abs{x}^{-p}, \Omega}; \norm[{**}]{\cdot}}$,
we get that there exists $\delta_{\epsilon} \in (0,1)$ such that
$$
1-\delta_{\epsilon}\geq \norm[{**}]{\frac{(u,u)+(v,v)}{2}}=\norm[{**}]{\bracket{\frac{u+v}{2},\frac{u+v}{2}}}=\norm[{*}]{\frac{u+v}{2}},
$$
which proves that $\bracket{\Wppz{1}{p}{\Omega}, \norm[{*}]{\cdot}}$ is uniformly convex Banach space.
\end{proof} 

\begin{definition}\cite{Ricceri}
By $W_X$ we denote the~class of all functionals $\Phi\colon X \to \setR$ possessing the~following property: if $\set{u_n}$ is a~sequence in $X$ converging weakly to $u\in X$ and $$\liminf\limits_{n\to\infty} \Phi (u_n)\leq \Phi(u),$$ then $\set{u_n}$ has a~ subsequence converging strongly to $u$.
\end{definition}

\begin{lemma}
$\Phi$ belongs to $W_{\Wppz{1}{p}{\Omega}}$. 
\end{lemma}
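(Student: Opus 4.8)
The plan is to exploit the identity $\Phi(u)=\frac{1}{p}\norm[*]{u}^p$ together with the uniform convexity of $\bracket{\Wppz{1}{p}{\Omega},\norm[*]{\cdot}}$ established in Lemma \ref{uniformly_convex}. For functionals that are a fixed power of a norm, membership in $W_X$ is exactly where the Radon--Riesz (Kadec--Klee) property enters, and uniform convexity is precisely the tool that delivers it.

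First I would take a sequence $\sequence{u_n}{n=1}{\infty}$ with $u_n\weakto u$ in $\Wppz{1}{p}{\Omega}$ and $\liminf_{n\to\infty}\Phi(u_n)\leq\Phi(u)$. Since $\Phi$ is sequentially weakly lower semicontinuous by Lemma \ref{swlsc}, one also has $\Phi(u)\leq\liminf_{n\to\infty}\Phi(u_n)$, and the two inequalities together force $\liminf_{n\to\infty}\Phi(u_n)=\Phi(u)$. Passing to a subsequence (not relabelled) that realizes this lower limit, I obtain $\Phi(u_n)\to\Phi(u)$, hence $\norm[*]{u_n}\to\norm[*]{u}$, because $\norm[*]{u}=(p\Phi(u))^{1/p}$ and $t\mapsto t^{1/p}$ is continuous. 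Note that, since $\norm[*]{\cdot}$ is equivalent to the usual Sobolev norm, the two norms induce the same dual and the same weak topology, so the weak convergence $u_n\weakto u$ holds in $\norm[*]{\cdot}$ as well.

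The decisive step is the Radon--Riesz property: in a uniformly convex space, $u_n\weakto u$ together with $\norm[*]{u_n}\to\norm[*]{u}$ implies $\norm[*]{u_n-u}\to0$. If $u=0$ this is immediate; otherwise I normalize and consider the midpoints $\frac{1}{2}\bracket{\frac{u_n}{\norm[*]{u_n}}+\frac{u}{\norm[*]{u}}}$, which converge weakly to the unit vector $u/\norm[*]{u}$. Weak lower semicontinuity of the norm forces the lower limit of these midpoint norms to be at least $1$, while the triangle inequality bounds each by $1$; hence the midpoint norms tend to $1$, and the contrapositive of the definition of uniform convexity gives $\norm[*]{\frac{u_n}{\norm[*]{u_n}}-\frac{u}{\norm[*]{u}}}\to0$. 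Undoing the normalization with $\norm[*]{u_n}\to\norm[*]{u}$ yields $u_n\to u$ strongly in $\norm[*]{\cdot}$, and by norm equivalence also in the usual norm. This is exactly the strongly convergent subsequence demanded by the definition of $W_{\Wppz{1}{p}{\Omega}}$.

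The main obstacle is the Radon--Riesz step, as it is the only part that genuinely uses uniform convexity rather than soft facts; everything else is bookkeeping with the lower-semicontinuity inequality and the continuity of $t\mapsto t^{1/p}$. One could instead cite the Radon--Riesz property as a known consequence of uniform convexity, but since the paper has built uniform convexity by hand via Clarkson's theorem, I would prefer to spell out the midpoint argument so that the reasoning stays self-contained.
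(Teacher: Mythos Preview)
Your argument is correct and follows the same route as the paper: combine the hypothesis with sequential weak lower semicontinuity to get $\liminf\Phi(u_n)=\Phi(u)$, extract a subsequence with $\norm[*]{u_{n_k}}\to\norm[*]{u}$, and then use the uniform convexity of $\bracket{\Wppz{1}{p}{\Omega},\norm[*]{\cdot}}$ to upgrade to strong convergence. The only difference is cosmetic: the paper simply invokes uniform convexity at the last step, whereas you unpack the Radon--Riesz property via the midpoint argument and make explicit the norm equivalence needed to transfer weak convergence to $\norm[*]{\cdot}$.
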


\begin{proof}
Assume $\bracket{u_n}^{\infty}_{n=1}\subset \Wppz{1}{p}{\Omega}$, $u_n\weakto u$ and $\liminf\limits_{n\to \infty} \Phi(u_n)\leq \Phi(u)$. From sequentially weakly lower semicontinuity of functional $\Phi$ it follows $\liminf\limits_{n\to \infty} \Phi(u_n)\geq\Phi(u)$. 
Thus there exists a~subsequence $u_{n_k}\in \Wppz{1}{p}{\Omega}$ of $u_n$ such that $\lim\limits_{n\to \infty}\Phi(u_{n_k})=\Phi(u)$. Explicitly:
\begin{align*}
\lim\limits_{n\to \infty} \bracket{\frac{1}{p}\int\limits_{\Omega}\abs{\nabla u_{n_k}(x)}^p +\mu \frac{\abs{u_{n_k}}^p}{\abs{x}^p}\dt[x]}&=\frac{1}{p}\int\limits_{\Omega}\abs{\nabla u(x)}^p+\mu \frac{\abs{u(x)}^p}{\abs{x}^p} \dt[x]=\frac{1}{p}\norm[{*}]{u}^p;\\
\lim\limits_{n\to \infty} \frac{1}{p}\norm[{*}]{u_{n_k}}^p&=\frac{1}{p}\norm[{*}]{u}^p;\\
\lim\limits_{n\to \infty} \norm[{*}]{u_{n_k}}&=\norm[{*}]{u}.
\end{align*} 
We proved that $\Wppz{1}{p}{\Omega}$ is uniformly convex Banach space, therefore there exists a~subsequence $(u_{n_k})$ of $(u_n)$ such that $\norm[{*}]{u_{n_k}-u}\to 0$.
\end{proof}

\begin{lemma}\label{sup>0}
Assume, that \ref{condition.f1}-\ref{condition.f3} holds. Then $\sup\limits_{\Phi (u)>0} \frac{J_1(u)}{\Phi(u)}>0$. 
\end{lemma}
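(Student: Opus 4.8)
The plan is to reduce the statement to exhibiting one admissible test function. As the supremum runs over $\set{u : \Phi(u) > 0}$, it suffices to construct a single $u \in \Wppz{1}{p}{\Omega}$ with $0 < \Phi(u) < +\infty$ and $J_1(u) > 0$; then $J_1(u)/\Phi(u)$ is a well-defined strictly positive number bounding the supremum from below. Condition \ref{condition.f3} provides positivity of $F$ at one point, and the whole task is to promote this pointwise information to positivity of the integral $J_1(u) = \int_\Omega F(u(x))\dt[x]$.

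First I would use \ref{condition.f3} to fix $t_0 \in \setR$ with $F(t_0) > 0$. Since $\Omega$ is open and bounded, I can choose an open set $\Omega' \compactembedding \Omega$ with $\abs{\Omega \setminus \Omega'}$ as small as I please, together with a cutoff $\eta \in \Ckz[\infty]{\Omega}$ satisfying $0 \le \eta \le 1$ and $\eta \equiv 1$ on $\Omega'$, and then set $u = t_0\, \eta$. This function belongs to $\Ckz[\infty]{\Omega} \subset \Wppz{1}{p}{\Omega}$, so $\Phi(u) < +\infty$: the gradient term is finite by smoothness and compact support, while the singular term $\int_\Omega \mu \abs{u(x)}^p / \abs{x}^p \dt[x]$ is finite by the Hardy inequality. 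Moreover $u \equiv t_0 \neq 0$ on $\Omega'$, a set of positive measure, so $u$ is not the zero element and the norm property (namely $\Phi(u) = 0$ forces $u = 0$) gives $\Phi(u) > 0$.

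Next I would estimate $J_1(u)$ by splitting $\Omega = \Omega' \cup (\Omega \setminus \Omega')$. On $\Omega'$ we have $u \equiv t_0$, contributing exactly $\abs{\Omega'} F(t_0) > 0$. On $\Omega \setminus \Omega'$ the values of $u = t_0\, \eta$ lie in the segment between $0$ and $t_0$, on which $F$, being continuous, is bounded by some constant $C \ge 0$; hence this part contributes at least $-C \abs{\Omega \setminus \Omega'}$. Altogether $J_1(u) \ge \abs{\Omega'} F(t_0) - C \abs{\Omega \setminus \Omega'}$, and choosing $\Omega'$ close enough to $\Omega$ (so that $\abs{\Omega \setminus \Omega'}$ is small while $\abs{\Omega'}$ stays bounded below) makes the right-hand side strictly positive, giving $J_1(u) > 0$.

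The one delicate point is the sign of $F$ on the transition layer $\Omega \setminus \Omega'$, where $F(u)$ may well be negative; this is exactly why the plateau $\Omega'$ is chosen to carry almost all the measure of $\Omega$, so that the guaranteed positive mass $\abs{\Omega'} F(t_0)$ dominates the possibly negative remainder. With $u$ so constructed we have $\Phi(u) > 0$ and $J_1(u) > 0$, whence $\sup\limits_{\Phi(u) > 0} J_1(u)/\Phi(u) \ge J_1(u)/\Phi(u) > 0$, which is the claim. I note that conditions \ref{condition.f1} and \ref{condition.f2} play no role in this particular argument.
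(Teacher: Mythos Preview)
Your argument is correct and follows essentially the same approach as the paper: both pick $t_0$ with $F(t_0)>0$ from \ref{condition.f3}, build a plateau function equal to $t_0$ on a large region and transitioning to $0$ on a region of small measure, and then use continuity of $F$ on the segment between $0$ and $t_0$ to show the plateau contribution dominates. The only cosmetic difference is that the paper localizes the construction inside a single ball $B(x_0,R)\subset\Omega$ and lets the inner radius tend to $R$ via a parameter $\delta\to 1$, whereas you exhaust nearly all of $\Omega$ by $\Omega'$; your remark that \ref{condition.f1}--\ref{condition.f2} are not actually used here is also accurate.
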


\begin{proof}
We show equivalently there exists $u_\delta$ such that $\frac{J_1(u_{\delta})}{\Phi (u_{\delta})}>0$. 
By \ref{condition.f3} there exists $s_0\in \setR$ such that $F(s_0)>0$. Let $\delta \in (0,1)$. Then there exists 
$ x_0 \in \Omega\setminus\partial\Omega $ such that there exist $R$, $r$ such that $R>r>0$ with $B(x_0,R)\subset \Omega\setminus\partial\Omega $.
We can choose $u_{\delta}\in \Wppz{1}{p}{\Omega}$ such that 
\begin{enumerate}
\item $\text{supp }u_{\delta} \subset B(x_0,R)\subset \Omega,$
\item $u_{\delta} \mid_{B(x_0,r+\delta(R-r))}\equiv s_0,$
\item ${\norm{u_{\delta}}}_{\infty}\leq s_0. $
\end{enumerate}
Then
\begin{align*}
J_1(u_{\delta})=& \int\limits_{B(x_0,R)}F(u_{\delta}(x))dx\geq F(s_0) \cdot \text{Vol}(N)[r+\delta(R-r)]^N\\
				& -\max\limits_{t\in [-s_0, s_0]} \abs{F(t)} \bracket{\text{Vol}(N)[R^N-(r+\delta(R-r))]^N},
\end{align*}
where $\text{Vol}(N)$ stands for the~measure of $N$-dimensional unit ball/
As $\delta \to 1$, then $J_1(u_{\delta})\to C>0.$ Let $\bar{\delta}$ be such that $J_1(u_{\bar{\delta}})>0$. Observe, that $u_{\bar{\delta}}\not\equiv 0$. So $\Phi(u_{\bar{\delta}})>0$ and $J_1(u_{\delta})>0$.
\end{proof}

\begin{lemma}\label{lim_u_to_0_<0}
Assume, that \ref{condition.f1}-\ref{condition.f3} holds. Then $\limsup\limits_{\norm[\Wppz{1}{p}{\Omega}]{u} \to 0}\frac{J_1(u)}{\Phi(u)}\leq 0$.
\end{lemma}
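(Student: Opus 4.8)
The plan is to fix an arbitrary $\epsilon>0$, establish a two-term pointwise upper bound for the potential $F$, integrate it, and divide by $\Phi$; the bound will consist of a term of order $\abs{t}^p$ carrying the small factor $\epsilon$ (controlling the behaviour of $F$ near the origin) and a term of order $\abs{t}^{p^\ast}$, with $p^\ast=\frac{np}{n-p}$, absorbing the growth of $F$ away from the origin. Letting $\epsilon\downarrow 0$ at the very end will then yield the claim.

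First I would derive the pointwise estimate. By condition \ref{condition.f1} there is $\delta=\delta(\epsilon)>0$ with $\abs{f(t)}\le\epsilon\abs{t}^{p-1}$ for $\abs{t}\le\delta$, and integrating from $0$ to $t$ gives $F(t)\le\frac{\epsilon}{p}\abs{t}^p$ whenever $\abs{t}\le\delta$. For $\abs{t}>\delta$ I would instead invoke the global growth bound $\abs{f(t)}\le c_f\abs{t}^{p-1}$, valid for all $t$ as a consequence of \ref{condition.f1}, \ref{condition.f2} and continuity of $f$ (exactly as already used in the proof of Lemma \ref{compact_dericative_J1}), which yields $\abs{F(t)}\le\frac{c_f}{p}\abs{t}^p$; since $\abs{t}>\delta$ and $p^\ast>p$, one has $\abs{t}^p\le\delta^{\,p-p^\ast}\abs{t}^{p^\ast}$. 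Combining the two regimes gives, for every $t\in\setR$, the inequality $F(t)\le\frac{\epsilon}{p}\abs{t}^p+C_\epsilon\abs{t}^{p^\ast}$, where $C_\epsilon=\frac{c_f}{p}\,\delta^{\,p-p^\ast}$.

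Integrating this over $\Omega$ yields $J_1(u)\le\frac{\epsilon}{p}\norm[{\Lp[p]{\Omega}}]{u}^p+C_\epsilon\norm[{\Lp[p^\ast]{\Omega}}]{u}^{p^\ast}$. Applying the Poincar\`e inequality (Theorem \ref{Poincare inequality}) to the first term and the continuous embedding $\Wppz{1}{p}{\Omega}\continuousembedding\Lp[p^\ast]{\Omega}$ (Theorem \ref{preliminaries.theorem.rellich-kondrachov}) to the second, I obtain $J_1(u)\le\frac{\epsilon}{p}C_p^p\norm[{\Wppz{1}{p}{\Omega}}]{u}^p+C_\epsilon S\norm[{\Wppz{1}{p}{\Omega}}]{u}^{p^\ast}$, where $S$ denotes the $p^\ast$-th power of the embedding constant. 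Since $\mu>0$ we have $\Phi(u)\ge\frac{1}{p}\norm[{\Wppz{1}{p}{\Omega}}]{u}^p>0$ for $u\ne 0$, so dividing gives $\frac{J_1(u)}{\Phi(u)}\le\epsilon C_p^p+pC_\epsilon S\,\norm[{\Wppz{1}{p}{\Omega}}]{u}^{p^\ast-p}$.

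Finally, because $p^\ast-p>0$, the last summand tends to $0$ as $\norm[{\Wppz{1}{p}{\Omega}}]{u}\to 0$, whence $\limsup\limits_{\norm[{\Wppz{1}{p}{\Omega}}]{u}\to 0}\frac{J_1(u)}{\Phi(u)}\le\epsilon C_p^p$; as $\epsilon>0$ was arbitrary, the $\limsup$ is $\le 0$. I expect the only delicate point to be organising the near-origin and far-field behaviour of $F$ into a single global inequality and making sure the far-field contribution is genuinely of higher order — which is precisely what $p^\ast>p$ together with the Sobolev embedding provides. Note that condition \ref{condition.f3} is not needed here; it enters only the companion estimate of Lemma \ref{sup>0}.
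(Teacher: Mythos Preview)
Your argument is correct and follows essentially the same scheme as the paper: bound $F(t)$ by $\frac{\epsilon}{p}\abs{t}^{p}$ plus a higher-order term, integrate, apply Poincar\`e/Sobolev, divide by $\Phi(u)\ge\frac{1}{p}\norm[{\Wppz{1}{p}{\Omega}}]{u}^{p}$, and finally send $\epsilon\downarrow 0$. The only cosmetic difference is that the paper takes the higher-order exponent to be a subcritical $p+\eta$ with $\eta\in\bigl(0,\tfrac{p^{2}}{n-p}\bigr)$ and uses the compact embedding $\Wppz{1}{p}{\Omega}\compactembedding\Lp[p+\eta]{\Omega}$, whereas you go straight to the critical exponent $p^{\ast}$ via the continuous embedding; both choices work equally well here.
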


\begin{proof}
Let $\epsilon >0$ and $\eta \in \bracket{0,\frac{p^2}{n-p}}$. We will use conditions \ref{condition.f1} and \ref{condition.f2}. For $\epsilon_{f_1}=\frac{1}{(C_p)^p}\epsilon $ there exists by \ref{condition.f1} and \ref{condition.f2} $\delta_{\epsilon}>0$ such that 
\begin{align*}
\abs{f(t)}  \leq \frac{1}{(C_p)^p} \epsilon \abs{t}^{p-1} \text{   for  } & \abs{t} \in [0,\delta_{\epsilon}] \cup [\delta_{\epsilon}^{-1},+\infty).\\
\abs{f(t)}  \leq M_{\epsilon} \abs{t}^{p-1+\eta} \text{   for   } & \abs{t} \in (\delta_{\epsilon}, \delta_{\epsilon}^{-1}).
\end{align*}
Then for all $t\in \setR$  
$$
\abs{f(t)} \leq \frac{1}{(C_p)^p}\epsilon \abs{t}^{p-1} + M_{\epsilon} \abs{t}^{p-1+\eta}. 
$$
\begin{align*}
\abs{J_1(u)}&=\abs{\int\limits_{\Omega} \int\limits^{u(x)}_{0} f(t) \dt[t]\ \dt[x]} \leq \int\limits_{\Omega}\abs{\int\limits^{u(x)}_{0}\abs{f(t)} \dt[t]} \dt[x] \leq \int\limits_{\Omega}\abs{\int\limits^{u(x)}_{0} \frac{1}{(C_p)^p} \epsilon \abs{t}^{p-1} + M_{\epsilon} \abs{t}^{p-1+\eta} \dt[t] }\dt[x]\\
& \leq \int\limits_{\Omega} \frac{1}{(C_p)^p} \epsilon \frac{1}{p} \abs{u(x)}^p + M_{\epsilon} \frac{1}{p+\eta} \abs{u(x)}^{p+\eta} \dt[x]\\
& = \frac{1}{(C_p)^p} \epsilon \frac{1}{p}\int\limits_{\Omega} \abs{u(x)}^p \dt[x] + M_{\epsilon} \frac{1}{p+\eta} \int\limits_{\Omega} \abs{u(x)}^{p+\eta} \dt[x]\\
& = \frac{1}{(C_p)^p}\epsilon \frac{1}{p} \norm[{\Lp[p]{\Omega}}]{u}^p + M_\epsilon \frac{1}{p+\eta} \norm[{\Lp[p+\eta]{\Omega}}]{u}^{p+\eta}.
\end{align*}
From Theorem \ref{preliminaries.theorem.rellich-kondrachov}, for $\eta \in \bracket{0,\frac{p^2}{n-p}}$ it follows $\Wppz{1}{p}{\Omega} \compactembedding \Lp[p+\eta]{\Omega}$.
We know that $\Phi(u) \geq \frac{1}{p} \norm[{\Wppz{1}{p}{\Omega}}]{u}^p$, so by dividing both sides we obtain
\begin{align*}
\frac{J_1(u)}{\Phi(u)} &\leq \frac{\frac{1}{(C_p)^p} \epsilon \frac{1}{p} \norm[{\Lp[p]{\Omega}}]{u}^p + M_{\epsilon} \frac{1}{p+\eta} \norm[{\Lp[p+\eta]{\Omega}}]{u}^{p+\eta}}{\frac{1}{p} \norm[{\Wppz{1}{p}{\Omega}}]{u}^p}
\leq \frac{\epsilon\frac{1}{p} \norm[{\Wppz{1}{p}{\Omega}}]{u}^p + \overline{M_{\epsilon}} \norm[{\Wppz{1}{p}{\Omega}}]{u}^{p+\eta}}{\frac{1}{p}  \norm[{\Wppz{1}{p}{\Omega}}]{u}^p}\\
&\leq \epsilon + \overline{M_{\epsilon}}\ p \norm[{\Wppz{1}{p}{\Omega}}]{u}^\eta.
\end{align*}
Since $\norm[{\Wppz{1}{p}{\Omega}}]{u}\to 0 $ thus $\limsup\limits_{\norm[{\Wppz{1}{p}{\Omega}}]{u} \to 0}\frac{J_1(u)}{\Phi(u)}\leq \epsilon$. Since $\epsilon>0$ was chosen arbitrarily we obtain $\limsup\limits_{\norm[{\Wppz{1}{p}{\Omega}}]{u} \to 0}\frac{J_1(u)}{\Phi(u)}\leq 0$.
\end{proof}

\begin{lemma}\label{lim_u_to_infty_<0}
Assume, that \ref{condition.f1}-\ref{condition.f3} holds. Then $\limsup\limits_{\norm[{\Wppz{1}{p}{\Omega}}]{u} \to \infty}\frac{J_1(u)}{\Phi(u)}\leq 0$.
\end{lemma}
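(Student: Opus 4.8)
The plan is to mirror the argument of Lemma~\ref{lim_u_to_0_<0}, but to exploit the behaviour of $f$ at infinity (condition~\ref{condition.f2}) in such a way that the error term carries a \emph{negative} power of the norm, which then vanishes as $\norm[{\Wppz{1}{p}{\Omega}}]{u}\to\infty$. First I would fix $\epsilon>0$ and a small exponent $\eta\in(0,p-1)$. By \ref{condition.f1} and \ref{condition.f2}, for $\epsilon_f=\frac{1}{(C_p)^p}\epsilon$ there is $\delta_\epsilon>0$ with
$$
\abs{f(t)}\leq \tfrac{1}{(C_p)^p}\epsilon\abs{t}^{p-1}\quad\text{for }\abs{t}\in[0,\delta_\epsilon]\cup[\delta_\epsilon^{-1},+\infty).
$$
On the remaining compact band $\abs{t}\in(\delta_\epsilon,\delta_\epsilon^{-1})$ the continuous function $f$ is bounded; since there $\abs{t}\geq\delta_\epsilon$ and $p-1-\eta>0$, such a bound can be rewritten as $\abs{f(t)}\leq M_\epsilon\abs{t}^{p-1-\eta}$ after absorbing the factor $\delta_\epsilon^{-(p-1-\eta)}$ into the constant $M_\epsilon$. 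Hence for every $t\in\setR$
$$
\abs{f(t)}\leq \tfrac{1}{(C_p)^p}\epsilon\abs{t}^{p-1}+M_\epsilon\abs{t}^{p-1-\eta}.
$$

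Next I would integrate this pointwise estimate to control $F$, giving $\abs{F(t)}\leq\frac{\epsilon}{p(C_p)^p}\abs{t}^p+\frac{M_\epsilon}{p-\eta}\abs{t}^{p-\eta}$ (the second integral converges because $\eta<p$), and then integrate over $\Omega$ to obtain
$$
\abs{J_1(u)}\leq \tfrac{1}{(C_p)^p}\epsilon\tfrac{1}{p}\norm[{\Lp[p]{\Omega}}]{u}^p+\tfrac{M_\epsilon}{p-\eta}\norm[{\Lp[p-\eta]{\Omega}}]{u}^{p-\eta}.
$$
As in Lemma~\ref{lim_u_to_0_<0}, the Poincar\`e inequality absorbs the constant $(C_p)^p$ in the first term, and the continuous embedding $\Wppz{1}{p}{\Omega}\continuousembedding\Lp[p-\eta]{\Omega}$ (valid since $1\leq p-\eta<p<p^\ast$, so that Theorem~\ref{preliminaries.theorem.rellich-kondrachov} applies) handles the second. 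Dividing by the lower bound $\Phi(u)\geq\frac1p\norm[{\Wppz{1}{p}{\Omega}}]{u}^p$ from the proof of Lemma~\ref{coercive} then yields
$$
\frac{J_1(u)}{\Phi(u)}\leq \epsilon+\overline{M_\epsilon}\,p\,\norm[{\Wppz{1}{p}{\Omega}}]{u}^{-\eta}.
$$
Because $\eta>0$, the last term tends to $0$ as $\norm[{\Wppz{1}{p}{\Omega}}]{u}\to\infty$, whence $\limsup\leq\epsilon$; letting $\epsilon\downarrow0$ finishes the proof.

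The only genuinely new point compared with Lemma~\ref{lim_u_to_0_<0} is the sign of the middle exponent: there one needed $\abs{t}^{p-1+\eta}$ so that the error decayed as the norm went to $0$, whereas here I must produce $\abs{t}^{p-1-\eta}$ with $\eta>0$ so that the norm appears to a power strictly below $p$ and the quotient decays at infinity. This is the step to get right, and it is exactly where the band being bounded away from $0$ matters: only there can the crude sup-bound on the compact middle range be upgraded to the decaying power $\abs{t}^{p-1-\eta}$. Everything else is a routine repetition of the previous computation.
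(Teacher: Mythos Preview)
Your argument is correct and follows essentially the same route as the paper's proof: both split $\setR$ into the tails $[0,\delta_\epsilon]\cup[\delta_\epsilon^{-1},\infty)$ (where \ref{condition.f1}--\ref{condition.f2} give the $\epsilon\abs{t}^{p-1}$ bound) and the compact middle band (where the sup bound is upgraded to $M_\epsilon\abs{t}^{p-1-\eta}$), then integrate, apply Poincar\'e and the embedding $\Wppz{1}{p}{\Omega}\hookrightarrow\Lp[p-\eta]{\Omega}$, and divide by $\Phi(u)\geq\frac1p\norm[{\Wppz{1}{p}{\Omega}}]{u}^p$. The only difference is cosmetic: the paper takes $\eta\in\bigl(0,\min\{\tfrac{p^2}{n-p},p-1\}\bigr)$, whereas you correctly observe that $\eta\in(0,p-1)$ already suffices here, since $p-\eta<p<p^\ast$ makes the embedding automatic and the upper bound $\tfrac{p^2}{n-p}$ was only needed in Lemma~\ref{lim_u_to_0_<0} for $\Lp[p+\eta]{\Omega}$.
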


\begin{proof}
Let $\epsilon >0$ and $\eta \in (0, \min\{\frac{p^2}{n-p}, p-1\})$. We will use conditions \ref{condition.f1} and \ref{condition.f2}. For $\epsilon_{f_1}=\frac{1}{(C_p)^p}\epsilon $ there exists by \ref{condition.f1} and \ref{condition.f2} $\delta_{\epsilon}$ and there exists $\eta \in \bracket{0,\frac{p^2}{n-p}}$ such that 
\begin{align*}
\abs{f(t)} \leq \frac{1}{(C_p)^p} \epsilon \abs{t}^{p-1} \text{   for   }& \abs{t} \in [0,\delta_{\epsilon}] \cup [\delta_{\epsilon}^{-1},+\infty).\\
\abs{f(t)} \leq M_{\epsilon} \abs{t}^{p-1-\eta} \text{   for   }& t \in (\delta_{\epsilon}, \delta_{\epsilon}^{-1}).
\end{align*}
Then
\begin{equation*}
\abs{f(t)} \leq \frac{1}{(C_p)^p}\epsilon \abs{t}^{p-1}+M_{\epsilon} \abs{t}^{p-1-\eta},  t \in \setR.
\end{equation*}
\begin{align*}
\abs{J_1(u)}&=\abs{\int\limits_{\Omega} \int\limits^{u(x)}_{0} f(t) \dt[t]\ \dt[x]} \leq \int\limits_{\Omega}\abs{\int\limits^{u(x)}_{0}\abs{f(t)} \dt[t]} \dt[x] \leq \int\limits_{\Omega}\abs{\int\limits^{u(x)}_{0} \frac{1}{(C_p)^p} \epsilon \abs{t}^{p-1} + M_{\epsilon} \abs{t}^{p-1-\eta} \dt[t] } \dt[x] \\
&\leq \int\limits_{\Omega} \frac{1}{(C_p)^p} \epsilon \frac{1}{p} \abs{u(x)}^p + M_{\epsilon} \frac{1}{p-\eta} \abs{u(x)}^{p-\eta} \dt[x]\\
&=\frac{1}{(C_p)^p} \epsilon \frac{1}{p}\int\limits_{\Omega} \abs{u(x)}^p \dt[x] + M_{\epsilon} \frac{1}{p-\eta} \int\limits_{\Omega} \abs{u(x)}^{p-\eta}\dt[x]\\
&= \frac{1}{(C_p)^p}\epsilon \frac{1}{p} \norm[{\Lp[p]{\Omega}}]{u}^p + M_\epsilon \frac{1}{p-\eta} \norm[{\Lp[p-\eta]{\Omega}}]{u}^{p-\eta}.
\end{align*}
We know that $\Phi(u) \geq \frac{1}{p} \norm[{\Wppz{1}{p}{\Omega}}]{u}$, so by dividing both sides we obtain
\begin{align*}
\frac{J_1(u)}{\Phi(u)} &\leq \frac{\frac{1}{(C_p)^p} \epsilon \frac{1}{p} \norm[{\Lp[p]{\Omega}}]{u}^p + M_{\epsilon} 
\frac{1}{p-\eta} \norm[{\Lp[p-\eta]{\Omega}}]{u}^{p-\eta}}{\frac{1}{p} {\norm[{\Wppz{1}{p}{\Omega}}]{u}}^p}
\leq \frac{\epsilon\frac{1}{p} \norm[{\Wppz{1}{p}{\Omega}}]{u}^p + \overline{M_{\epsilon}} \norm[{\Wppz{1}{p}{\Omega}}]{u}^{p-\eta}}{\frac{1}{p}  \norm[{\Wppz{1}{p}{\Omega}}]{u}^p}\\
&\leq \epsilon + \overline{M_{\epsilon}}\ p \norm[{\Wppz{1}{p}{\Omega}}]{u}^{-\eta}.
\end{align*}
Since $\norm[{\Wppz{1}{p}{\Omega}}]{u}\to \infty $ thus $\limsup\limits_{\norm[{\Wppz{1}{p}{\Omega}}]{u} \to \infty}\frac{J_1(u)}{\Phi(u)}\leq \epsilon$. \\
Since $\epsilon>0$ chosen arbitrarily thus $\limsup\limits_{\norm[{\Wppz{1}{p}{\Omega}}]{u} \to \infty}\frac{J_1(u)}{\Phi(u)}\leq 0$.
\end{proof}

We will use the~following Ricceri theorem to show that the~Problem \ref{problem.weak.solution} has a~solution.

\begin{theorem}[\textbf{Ricceri}\cite{Ricceri}]\label{Ricceri}\mbox{}\\
Let $X$ be a~separable and reflexive real Banach space; $\Phi \colon X \to \setR$ a~coercive, s.w.l.s.c. $\Ck[1]{}$~functional, belonging to $W_X$, bounded on each bounded subset of $X$ and whose derivative admits a~continuous inverse on $\dualSpace{X}$; $J_1 \colon X \to \setR$ a~$\Ck[1]{}$ functional with compact derivative. Assume that $\Phi$ has a~strict local minimum $x_0$ with $\Phi(x_0)=J_1(x_0)=0$. Finally, setting 
\begin{align*}
& \alpha = \max \set{0, \limsup_{\norm{x}\to +\infty} \frac{J_1(x)}{\Phi(x)}, \limsup_{x \to x_0} \frac{J_1(x)}{\Phi(x)}}\\
& \beta = \sup\limits_{x \in \Phi^{-1}((0,+\infty))} \frac{J_1(x)}{\Phi(x)},
\end{align*}
assume $\alpha < \beta$. Then for each compact interval $[a,b] \subset (\frac{1}{\beta}, \frac{1}{\alpha})$, with the~conventions $(\frac{1}{0} = +\infty, \frac{1}{+\infty}=0)$, there exists $ r>0$ with the~following property: for every $\lambda \in [a,b]$ and every $\Ck[1]{}$~functional $J_2 \colon X \to \setR$ with compact derivative, there exists $\delta >0$ such that, for each $\mu \in [0, \delta]$, the~equation
\begin{equation*}
\Phi'(x) = \lambda J_1'(x) + \mu J_2'(x),
\end{equation*}
has at least three solutions whose norm are less than $r$.
\end{theorem}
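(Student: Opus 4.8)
The statement is the abstract three--critical--points theorem of Ricceri, so the plan is to reconstruct its proof from variational machinery; the concrete lemmas proved earlier in the paper are not used here, they only serve later to check that our $\Phi$, $J_1$ and $J_2$ meet its hypotheses. I would treat the unperturbed functional $E_\lambda = \Phi - \lambda J_1$ first and activate the perturbation $\mu J_2$ only at the very end. Because $J_1$ has compact derivative it is sequentially weakly continuous, so together with the hypotheses on $\Phi$ (coercive, sequentially weakly lower semicontinuous, $\Ck[1]{}$, with $\Phi'$ admitting a continuous inverse and $\Phi \in W_X$) the functional $E_\lambda$ is a coercive, sequentially weakly lower semicontinuous $\Ck[1]{}$ functional satisfying the Palais--Smale condition. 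Verifying Palais--Smale is precisely where $\Phi \in W_X$ and the continuous invertibility of $\Phi'$ enter: they let one turn a bounded Palais--Smale sequence, whose $\Phi$-values converge, into a strongly convergent one.

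Next I would read off the two--local--minima geometry from $\alpha < \beta$ and the choice $\lambda \in (1/\beta, 1/\alpha)$. Unwinding the definitions, $\lambda < 1/\alpha$ gives $\lambda J_1(x) < \Phi(x)$ both in a punctured neighbourhood of $x_0$ and for $\norm{x}$ large; since $x_0$ is a \emph{strict} local minimum of $\Phi$ with $\Phi(x_0)=0$, this makes $E_\lambda(x) > 0 = E_\lambda(x_0)$ near $x_0$, so $x_0$ is a strict local minimizer of $E_\lambda$, and it simultaneously keeps $E_\lambda$ coercive. On the other hand $\lambda > 1/\beta$ produces a point $\bar{x}$ with $\Phi(\bar{x})>0$ and $\lambda J_1(\bar{x}) > \Phi(\bar{x})$, i.e. $E_\lambda(\bar{x}) < 0$. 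Coercivity and sequential weak lower semicontinuity then force $E_\lambda$ to attain a global minimum at some $x_1$ with $E_\lambda(x_1) \le E_\lambda(\bar{x}) < 0$, so $x_1 \neq x_0$ and $x_0$ is a local but not global minimizer. Having two distinct local minima and the Palais--Smale condition, I would invoke the Pucci--Serrin mountain--pass theorem (a $\Ck[1]{}$ functional with two local minima and Palais--Smale has a third critical point) to obtain a third critical point of $E_\lambda$.

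It then remains to make the count uniform over the compact interval $[a,b] \subset (1/\beta, 1/\alpha)$ and stable under the perturbation. Since $\alpha$ and $\beta$ do not depend on $\lambda$ and $E_\lambda$ depends affinely on $\lambda$, the coercivity and local--minimum estimates can be made uniform for $\lambda \in [a,b]$, producing a single radius $r>0$ that contains all three critical points for every such $\lambda$. For a fixed $\Ck[1]{}$ functional $J_2$ with compact derivative I would then pass to $E_{\lambda,\mu} = \Phi - \lambda J_1 - \mu J_2$ and show that for $\mu$ below some threshold $\delta$ the whole configuration survives inside the ball of radius $r$: the strict local minimum near $x_0$ persists, the strictly negative value at $\bar{x}$ persists so the constrained global minimizer stays strictly below it, and Palais--Smale is stable under a $\Ck[1]{}$-small perturbation, so the Pucci--Serrin argument again yields three critical points, all of norm less than $r$.

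The main obstacle is exactly this last step: extracting one $r$ and one $\delta$ that work simultaneously for the entire parameter range and for an arbitrary admissible $J_2$. The third, mountain--pass critical point need not vary continuously with $\lambda$ or $\mu$, so an implicit--function or homotopy argument is not available; the robustness must instead be wrung from the minimax characterization of the mountain--pass level together with uniform a priori bounds. This quantitative stability is the genuinely hard, nonstandard content of Ricceri's theorem, and it is the reason the result is quoted here rather than reproved in full.
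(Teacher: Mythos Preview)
The paper does not prove this statement at all: Theorem~\ref{Ricceri} is quoted verbatim from \cite{Ricceri} as an abstract tool, with no proof or sketch provided. Your proposal is therefore not comparable to ``the paper's own proof'' --- there is none --- but is rather an attempt to reprove Ricceri's theorem from scratch.

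As a sketch of Ricceri's argument your outline is broadly reasonable (two local minima via the $\alpha<\beta$ geometry, Palais--Smale from $\Phi\in W_X$ and continuous invertibility of $\Phi'$, a mountain-pass third critical point, then stability), and you are right to flag the uniform-in-$\lambda$ bound $r$ and the perturbation threshold $\delta$ as the delicate part. One point to tighten: you infer coercivity of $E_\lambda$ from $\lambda<1/\alpha$ alone, but the definition of $\alpha$ only gives $\limsup_{\norm{x}\to\infty} J_1/\Phi \le \alpha$, so $\lambda J_1(x)<\Phi(x)$ is guaranteed only asymptotically, not for all large $\norm{x}$; you need to combine this with the coercivity of $\Phi$ itself (which is a separate hypothesis) to conclude that $E_\lambda\to+\infty$. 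Similarly, the claim that the strict local minimum at $x_0$ and the Palais--Smale property ``persist'' for small $\mu$ is plausible but is exactly where Ricceri's proof does nontrivial work that your sketch does not supply; as you yourself concede in the last paragraph, this quantitative stability is the real content, so the proposal is an outline rather than a proof.
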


\begin{theorem}[The existence of three weak solutions of Problem \ref{problem.weak.solution}] \label{main.theorem}
Assume that conditions \ref{condition.f1}-\ref{condition.f3} and \ref{condition.g1} hold. Then there exists $\beta>0$ such that for each compact interval $[a,b] \subset (\beta, +\infty)$, there exists $ r>0$ with the~following property: for every $\lambda \in [a,b]$, there exists $\delta >0$ such that, for each $\gamma \in [0, \delta]$, the~Problem \ref{problem.weak.solution} has at least three solutions whose norm are less than $r$. 
\end{theorem}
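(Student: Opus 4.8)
The plan is to apply the Ricceri three critical point theorem (Theorem \ref{Ricceri}) with the separable reflexive Banach space $X=\Wppz{1}{p}{\Omega}$ and the functionals $\Phi$, $J_1$, $J_2$ introduced above, identifying the abstract multiplier that Theorem \ref{Ricceri} attaches to $J_2'$ with our disturbance parameter $\gamma$ (the singular coefficient $\mu$ stays fixed and is absorbed into $\Phi$). The key observation is that the abstract equation $\Phi'(u)=\lambda J_1'(u)+\gamma J_2'(u)$, after pairing with an arbitrary $v\in\Wppz{1}{p}{\Omega}$, becomes the identity $\dual{\Phi'(u)}{v}=\lambda\dual{J_1'(u)}{v}+\gamma\dual{J_2'(u)}{v}$, which by Lemma \ref{well_defined_functionals} is precisely the weak formulation of Problem \ref{problem.weak.solution}. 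Hence each of the three abstract solutions produced by Theorem \ref{Ricceri} is a weak solution of Problem \ref{problem.weak.solution}, and the norm bound $r$ transfers verbatim.

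First I would collect the structural hypotheses on $\Phi$, all of which are already established: $X$ is separable and reflexive; $\Phi$ is coercive (Lemma \ref{coercive}), sequentially weakly lower semicontinuous (Lemma \ref{swlsc}), of class $\Ck[1]{}$ with the derivative computed in Lemma \ref{well_defined_functionals}, bounded on bounded subsets (by the remark following that lemma), belongs to $W_X$ (by the corresponding lemma), and its derivative admits a continuous inverse on $\dualSpace{X}$ (Lemma \ref{continuous_inverse}). Likewise $J_1$ and $J_2$ are $\Ck[1]{}$ functionals with compact derivatives by Lemma \ref{compact_dericative_J1} and Lemma \ref{compact_derivative_J2}, respectively.

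Next I would fix the base point $x_0=0$. Since $\Phi(u)=\frac{1}{p}\norm[*]{u}^p$ and $\norm[*]{\cdot}$ is a norm, we have $\Phi(0)=0$ and $\Phi(u)>0$ for every $u\neq 0$, so $0$ is a strict (global, hence local) minimum of $\Phi$; moreover $J_1(0)=\int_\Omega F(0)\dt[x]=0$ because $F(0)=0$. It then remains to evaluate the two constants of Theorem \ref{Ricceri}. From Lemma \ref{lim_u_to_0_<0} and Lemma \ref{lim_u_to_infty_<0} both relevant limits superior are nonpositive, so
\[
\alpha=\max\set{0,\ \limsup_{\norm{u}\to+\infty}\frac{J_1(u)}{\Phi(u)},\ \limsup_{u\to 0}\frac{J_1(u)}{\Phi(u)}}=0,
\]
while Lemma \ref{sup>0} gives $\beta=\sup_{\Phi(u)>0}\frac{J_1(u)}{\Phi(u)}>0$. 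Thus $\alpha<\beta$, so the decisive hypothesis of Theorem \ref{Ricceri} is verified.

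Finally, since $\alpha=0$ the admissible interval of Theorem \ref{Ricceri} is $(\frac{1}{\beta},\frac{1}{\alpha})=(\frac{1}{\beta},+\infty)$, so I would take the threshold announced in the statement to be $\frac{1}{\beta}$, with $\beta$ the constant of Lemma \ref{sup>0}. For any compact $[a,b]$ inside this interval Theorem \ref{Ricceri} supplies the radius $r>0$, and for each $\lambda\in[a,b]$ it supplies the $\delta>0$ such that for every $\gamma\in[0,\delta]$ the equation has at least three solutions of norm less than $r$; translating through the weak formulation yields the claim. The main obstacle is not any single estimate but the bookkeeping: one must match every hypothesis of Theorem \ref{Ricceri} to the correct lemma, confirm that $x_0=0$ genuinely realises $\Phi(x_0)=J_1(x_0)=0$ together with strictness, and ensure the computation of $\alpha$ truly invokes both the behaviour near the origin (Lemma \ref{lim_u_to_0_<0}) and at infinity (Lemma \ref{lim_u_to_infty_<0}); the notational collision between the fixed singular weight $\mu$ and Ricceri's abstract multiplier must also be handled cleanly.
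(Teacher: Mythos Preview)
Your proposal is correct and follows exactly the same approach as the paper: verify each hypothesis of Theorem \ref{Ricceri} via the corresponding lemma (separability/reflexivity of $X$, Lemmas \ref{well_defined_functionals}, \ref{swlsc}, \ref{coercive}, \ref{compact_dericative_J1}, \ref{compact_derivative_J2}, \ref{continuous_inverse}, the $W_X$ lemma, \ref{sup>0}, \ref{lim_u_to_0_<0}, \ref{lim_u_to_infty_<0}), take $x_0=0$ as the strict minimum with $\Phi(0)=J_1(0)=0$, conclude $\alpha=0<\beta$, and then read off the interval $(\tfrac{1}{\beta},+\infty)$; your write-up is in fact more explicit than the paper's, which simply lists the lemmas and invokes Ricceri. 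Your care in distinguishing the threshold $\tfrac{1}{\beta}$ of the statement from Ricceri's $\beta$, and in noting the clash between the fixed singular weight $\mu$ and Ricceri's abstract multiplier, is well placed.
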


\begin{proof}
$\Wppz{1}{p}{\Omega}$ is obviously separable and reflexive. By Theorem \ref{Ricceri}  and Lemmas \ref{well_defined_functionals}, \ref{swlsc}, \ref{coercive}, \ref{compact_dericative_J1}, \ref{compact_derivative_J2}, \ref{continuous_inverse}, \ref{uniformly_convex}, \ref{sup>0}, \ref{lim_u_to_0_<0}, \ref{lim_u_to_infty_<0} and since $\Phi(0)=J_1(0)=0$ and $0 \in \Wppz{1}{p}{\Omega}$ is a~strict minimum of $\Phi$. By abstract existence result of Ricceri there exists $[a,b]\subset \bracket{\frac{1}{\chi},\frac{1}{\tau}}$, such that for all $J_2\in C^1$, $J_2$ has a~compact derivative and Problem \ref{problem.weak.solution} has 3 solutions. So two of them must be non trivial.
\end{proof}

\section{Example}
Let $\Omega\subseteq \setR^n$ be bounded set with Lipschitz boundary such that $0 \in \Omega$. Furthermore let $2 \leq p < n$ and $\mu \in(0,+\infty)$. We consider the following problem:
\begin{problem2} \label{example.problem}
Find $u \in \Wppz{1}{p}{\Omega}$ such that for all $v \in \Wppz{1}{p}{\Omega}$
\begin{align*}
\int\limits_{\Omega} \abs{\nabla u(x)}^{p-2} \nabla u(x) \nabla v(x) + \mu \frac{\abs{u(x)}^{p-2} u(x) v(x) }{ \abs{x}^p} \dt[x] \\
=\lambda \int\limits_{\Omega} f(u(x))v(x) \dt[x] + \gamma \int\limits_{\Omega} g(u(x))v(x) \dt[x]
\end{align*}
with functions $f,g\colon \setR \to \setR$ given by
\begin{align*}
f(t)=\left\lbrace
\begin{array}{cl}
\bracket{\frac{\pi}{2r}}^{p-1}\abs{\sin(rt)}^p & \text{for } \abs{t} \leq \frac{\pi}{2r}\vspace{3mm}\\
\frac{\bracket{1+\bracket{\frac{\pi}{2r}}^2}\abs{t}^{p-1}}{1+t^2} & \text{for } \abs{t} > \frac{\pi}{2r}
\end{array}
\right.
\end{align*}
and
\begin{align*}
g(t)=\left\lbrace
\begin{array}{cl}
1+\abs{t}^{q-1} & \text{for } \abs{t} \leq z\vspace{3mm}\\
\frac{\bracket{1+z^2}\bracket{1+z^{q-1}}}{1+t^2} & \text{for } \abs{t}>z,
\end{array}
\right. 
\end{align*}
where $r,z>0$ are fixed constants and $1<q<\frac{np}{n-p}$. 
\end{problem2}
\noindent Both functions $f$ and $g$ are obviously continuous on $\setR$. We will show that function $f$ satisfies conditions \ref{condition.f1}-\ref{condition.f3}. Indeed, 
\begin{align*}
\lim\limits_{\abs{t}\to 0}\frac{f(t)}{\abs{t}^{p-1}}=\lim\limits_{\abs{t}\to 0} \frac{\bracket{\frac{\pi}{2r}}^{p-1}\abs{\sin(rt)}^p}{\abs{t}^{p-1}}=\lim\limits_{\abs{t}\to 0} \bracket{\frac{\pi}{2}}^{p-1}\abs{\frac{\sin(rt)}{rt}}^{p-1} \abs{\sin(rt)} =0.
\end{align*}
Hence condition \ref{condition.f1} holds.
Moreover, also condition \ref{condition.f2} is verified, because
\begin{align*}
\lim\limits_{\abs{t}\to \infty}\frac{f(t)}{\abs{t}^{p-1}}=\lim\limits_{\abs{t}\to \infty} \frac{\bracket{1+\bracket{\frac{\pi}{2r}}^2}\abs{t}^{p-1}}{\bracket{1+t^2}\abs{t}^{p-1}}=0.
\end{align*}
Furthermore $\sup\limits_{t\in \setR} F(t)>0$, because for $t=\frac{\pi}{2r}$ we have
\begin{align*}
F\bracket{\frac{\pi}{2r}}&=\int\limits_{0}^\frac{\pi}{2r} f(s) ds= \int\limits_{0}^\frac{\pi}{2r} \bracket{\frac{\pi}{2r}}^{p-1}\abs{\sin(rs)}^p \dt[s]=\bracket{\frac{\pi}{2r}}^{p-1}\int\limits_{0}^\frac{\pi}{2r} \abs{\sin(rs)}^p \dt[s]\\
&\geq \bracket{\frac{\pi}{2r}}^{p-1}\int\limits_{\frac{\pi}{4r}}^\frac{\pi}{2r} \bracket{\frac{\sqrt{2}}{2}}^p \dt[s]=\frac{1}{2}\bracket{\frac{\pi\sqrt{2}}{4r}}^p>0.
\end{align*}
Also condition \ref{condition.g1} is satisfied by function $g$ with constant $c_g= 1$. For $\abs{t}\leq r$ we obviously have
\begin{align*}
g(t)=1+\abs{t}^{q-1}.
\end{align*}
On the other hand for $\abs{t}\geq r$
\begin{align*}
g(t)= \frac{\bracket{1+z^2}\bracket{1+z^{q-1}}}{1+t^2}\leq \bracket{1+z^{q-1}} \leq \bracket{1+\abs{t}^{q-1}}.
\end{align*}
From theorem \ref{main.theorem}, there exists $\beta>0$ such that for each compact interval $[a,b] \subset (\beta, +\infty)$, there exists $ w>0$ with the~following property: for every $\lambda \in [a,b]$, there exists $\delta >0$ such that, for each $\gamma \in [0, \delta]$, the~Problem \ref{example.problem} has at least three solutions whose norm are less than $w$.
\section*{Acknowledgement}
The authors would like to thank professor Giovanni Molica Bisci from Universit\'a degli Studi Mediterranea in Italy for introducing authors to this type of problems and to thank professor Aleksander \'{C}wiszewski from Nicolaus Copernicus University in Poland, for his remarks that allowed to exclude unnecessary assumptions and include the omitted ones.
\bibliographystyle{plain}
\bibliography{bibliografia}

\begin{thebibliography}{10}

\bibitem{Adams}
R.~A. Adams and J.~J.~F. Fournier.
\newblock {\em Sobolev Spaces}.
\newblock Elsevier, 2003.

\bibitem{Azorero}
J.~P.~Garc\'ia Azorero and I.~Peral Alonso.
\newblock Hardy inequalities and some critical elliptic and parabolic problems.
\newblock {\em Journal of Differential Equations}, (144):441--476, 1998.

\bibitem{Binding.Drabek.Huang}
P.~A. Binding, P.~Dr\'{a}bek, and Y.~X. Huang.
\newblock Existence of multiple solutions of critical quasilinear elliptic
  {N}eumann problems.
\newblock {\em Nonlinear Analysis}, 42:613--629, 2000.

\bibitem{Bonanno.Candito}
G.~Bonanno and P.~Candito.
\newblock Three solutions to a {N}eumann problem for elliptic equations
  involving the p-{L}aplacian.
\newblock {\em Archiv der Mathematik}, (80):424--429, 2003.

\bibitem{Chabrowski}
J.~H. Chabrowski.
\newblock {\em Variational methods for potential operator equations: with
  applications to nonlinear elliptic equations}, volume~24.
\newblock Walter de Gruyter, 1997.

\bibitem{Clarkson}
J.~A. Clarkson.
\newblock Uniformly convex spaces.
\newblock {\em Transactions of the American Mathematical Society},
  (3):396--414, 1936.

\bibitem{Cuesta.Quoirin}
M.~Cuesta and H.~R. Quoirin.
\newblock A weighted eigenvalue problem for the p-{L}aplacian plus a potential.
\newblock {\em Nonlinear Differential Equations and Applications NoDEA},
  16(4):469--491, 2009.

\bibitem{Dagui.Molica-Bisci}
G.~D'Aguì and G.~Molica Bisci.
\newblock Existence results for an elliptic {D}irichlet problem.
\newblock {\em Le Matematiche}, (66):133--141, 2011.

\bibitem{Ferrara.Molica-Bisci}
M.~Ferrara and G.~Molica Bisci.
\newblock Existence result for elliptic problems with {H}ardy potential.
\newblock {\em Bulletin des Sciences Math{\'e}matiques}, 138(7):846--859, 2014.

\bibitem{Filippucci.Pucci.Robert}
R.~Filippucci, P.~Pucci, and F.~Robert.
\newblock On a p-{L}aplace equation with multiple critical nonlinearities.
\newblock {\em Journal de math{\'e}matiques pures et appliqu{\'e}es},
  91(2):156--177, 2009.

\bibitem{Gasinski.Papageorgiou}
L.~Gasiński and N.~S. Papageorgiou.
\newblock Nontrivial solutions for a class of resonant p-{L}aplacian {N}ewmann
  problems.
\newblock {\em Nonlinear Analysis}, 71:6365--6372, 2009.

\bibitem{Kristaly.Varga}
A.~Krist{\'a}ly and C.~Varga.
\newblock Multiple solutions for elliptic problems with singular and sublinear
  potentials.
\newblock {\em Proceedings of the American Mathematical Society},
  (7):2121--2126, 2007.

\bibitem{Lindqvist}
P.~Lindqvist.
\newblock On the equation $\text{div}(|\nabla u|^{p-2}\nabla u)+\lambda
  |u|^{p-2}u=0$.
\newblock {\em P.Amer. Math. Soc.}, pages 157--164, 1990.

\bibitem{Lucia.Prashanth}
M.~Lucia and S.~Prashanth.
\newblock Simplicity of principal eigenvalue for p-{L}aplace operator with
  singular indefinite weight.
\newblock {\em Archiv der Mathematik}, 86(1):79--89, 2006.

\bibitem{Ricceri}
B.~Ricceri.
\newblock A further three critical point theorem.
\newblock {\em Nonlinear Analysis}, (71):4151--4157, 2009.

\bibitem{Rudin}
W.~Rudin.
\newblock {\em Functional Analysis}.
\newblock PWN, 2011.

\bibitem{Zeidler}
E.~Zeidler.
\newblock {\em Nonlinear Functional Analysis and its Applications}, volume
  II/B.
\newblock Springer, 1990.

\end{thebibliography}

\end{document}